\pgfplotsset{compat=1.15}
\theoremstyle{plain}
\newtheorem{thm}{Theorem}[section]
\newtheorem{defn}[thm]{Definition}
\newtheorem{rem}[thm]{Remark}
\newtheorem{lem}[thm]{Lemma}
\newtheorem{nota}[thm]{Notation}
\numberwithin{equation}{section}
\begin{document}

\title{Quantum Symmetry on Potts Model}
\begin{abstract}
	We formulate the notion of quantum group symmetry of the hamiltonian corresponding to Potts model and compute it for few simple models. Our examples illustrate how a slight change of the model parameter may result in a drastic change of the quantum symmetry group, (in some cases, the classical symmetry group remains unaffected) signifying a case of phase transition.
\end{abstract}

\author{Debashish Goswami}

\address{Statistics and Mathematics Unit, Indian Statistical Institute\\ 203, B.T. Road, Kolkata 700108, Indi\\\textnormal{email: \texttt{debashish\_goswami@yahoo.co.in}}}

\author{Sk Asfaq Hossain}
\address{Statistics and Mathematics Unit, Indian Statistical Institute\\ 203, B.T. Road, Kolkata 700108, India\\\textnormal{email: \texttt{asfaq1994@gmail.com}}}

\maketitle

\section{Introduction}
Symmetry is one of the oldest and most important central concepts used in nearly every branch of science. Classically, symmetry has been modelled by group actions but a more general paradigm of Hopf algebras or quantum groups is also quite common by now (see, e.g. (\cite{drinfeld1986icm}), (\cite{MR901157}), (\cite{MR1358358}) and  references therein, just to mention a few). In this article, we want to consider quantum group symmetry of a very popular class of physical model, namely the Potts model.  Typically, Potts models are studied on infinite lattices or infinite graphs and thermodynamic properties are analysed. However, we will consider only finite graphs for a simpler mathematical treatment. There is another important departure from the conventional framework. Instead of looking at phase transition in terms of break of continuity or smoothness of some suitable thermodynamic term, we'll take a symmetry viewpoint, following the classical ideas of Landau (\cite{landau1969statistical}), which says that a change of the group of symmetry of the underlying physical system signifies a change of phase. For example, gaseous phase has lot more symmetry than liquid which is constrained to have a fixed volume. Similarly, liquid has more symmetry than solid. We will consider quantum group symmetry generalising group symmetry. We give a few examples where a slight perturbation  of the hamiltonian of the model leads to a significant change of the (quantum) symmetry group, which may be interpreted as a phase change. It is perhaps interesting to note that in several models of condensed matter physics, 
 there are theoretical and experimental explanations of effect of doping-induced phase transition in terms of change of the point symmetry group of the underlying crystal structures (see for instance, (\cite{perez2012symmetry})). We propose extension of such ideas to the realm of quantum group symmetry. However, we must admit that our mathematical examples  are in not the realistic models taken from physics or any other natural science. All we want in this article is to illustrate our novel perspective  through a few kind of `toy-models' to encourage  an in-depth analysis of real-life Potts model based on quantum group symmetry. The simplicity of the models in our paper help us compute the exact quantum symmetry of them, using which we could show instance of phase transition with the change of symmetry. 
 \par 
 It may be noted that the interplay of quantum groups and operator algebras with the models (including Potts model) of statistical mechanics goes back to the seminal work of Jones, (see for example, \cite{MR990215}) leading to a theory of work connecting subfactor theory, quantum field theory and so on with such physical models. Later Banica showed (see for example, \cite{MR1654119}) that the spin and vertex models of Jones do come from quantum groups.
 \par
 However, an important difference between our approach to (quantum) symmetry from the works mentioned above is that we consider a (quantum) symmetry group (co)acting on the vertices of the lattice or graph commuting with the hamiltonian instead of an on-site symmetry coming from the permutation of the set of states or the commutator with the site-to-site transfer matrices as in (\cite{MR1317365}). Nevertheless, as the transfer matrices are closely related to the hamiltonian of the underlying model, there may be an interesting connection between the two approaches, which we will try to explore later.
 \par

Let us now give a brief sketch of the paper. We consider Potts model on a finite graph with a finite state space, identified with elements of a suitable cyclic group and formulate a notion of (co)action of compact quantum groups on the space of functions on the vertex set of the graph such that the coaction commutes with the hamiltonian (i.e. it gives a symmetry of the underlying model). Following the line of (\cite{MR2174219}) we prove the existence of a universal compact quantum group which gives symmetry of the Potts model in the above sense, calling it the quantum group of symmetry of the model. Then we compute this quantum group in a few examples and show how a slight change of the parameters of the model can result in a rather remarkable change of quantum symmetry. More interestingly, we give an example where classical symmetry group remains the same even after the change of the model parameters but quantum symmetry group changes. This makes a strong case for studying quantum group symmetry of physical models.

\section{Prelimineries}
\subsection{Compact quantum group}
We start with a brief description of compact quantum group and few related concepts associated with it.   For more details, see (\cite{MR1645264}), (\cite{MR1358358}), (\cite{MR901157}) and (\cite{MR3559897}). All C* algebras are assumed to be unital in this paper and the tensor product considered is the minimal tensor product among C* algebras.
\begin{defn}
	A compact quantum group is a pair $(\mathcal{S},\Delta)$  where $\mathcal{S}$ is a unital C* algebra and $\Delta:\mathcal{S}\rightarrow \mathcal{S}\otimes \mathcal{S}$ is a homomorphism of C* algebras satisfying the following conditions:
	\begin{enumerate}
		\item $(\Delta\otimes id)\Delta=(id\otimes\Delta)\Delta$ (Coassociativity).
		\item Each of the linear spans of $\Delta(\mathcal{S})(1\otimes \mathcal{S})$ and $\Delta(\mathcal{S})(\mathcal{S}\otimes 1)$ is norm-dense in $\mathcal{S}\otimes \mathcal{S}$.
	\end{enumerate}
\end{defn}

It is known that there exists a unique Haar state on a compact quantum group which is the non-commutative analogue of Haar measure on a classical compact group.
\begin{defn}
	The Haar state $h$  on a compact quantum group is the unique state on $\mathcal{S}$ which satisfies the following conditions:
	\begin{equation*}
	(h\otimes id)\Delta(a)=h(a)1_{S} \quad \text{and}\quad (id\otimes h)\Delta(a)=h(a)1_{S}
	\end{equation*}
for all $a\in \mathcal{S}$.
\end{defn}
\begin{defn}
	A quantum group homomorphism $\Phi$ between two compact quantum groups $(\mathcal{S}_1,\Delta_1)$ and $(\mathcal{S}_2,\Delta_2)$ is a C* algebra homomorphism $\Phi:\mathcal{S}_1\rightarrow\mathcal{S}_2$ satisfying the following condition:
	\begin{equation*}
		(\Phi\otimes\Phi)\circ\Delta_1=\Delta_2\circ\Phi
	\end{equation*}
\end{defn}
Now we breifly state few facts from the co-representation theory of compact quantum groups. 
\begin{defn}
	Let $n\in\mathbb{N}$. An n dimensional co-representation of a compact quantum group $(\mathcal{S},\Delta)$ is an $\mathcal{S}$ valued $n\times n$ matrix $(v_{ij})_{n\times n}$ of satisfying the following property:
	\begin{equation*}
		\Delta(v_{ij})=\sum_{k=1}^n v_{ik} \otimes v _{kj}
	\end{equation*}
$v_{ij}$'s are called matrix elements of the n dimensional co-representation of $(\mathcal{S},\Delta)$.
\end{defn}
It is known that for a compact quantum group $(\mathcal{S},\Delta)$, there is a dense subalgebra $\mathcal{S}_0$ generated by the matrix elements of finite dimensional co-representations of $(\mathcal{S}_0,\Delta)$ which forms a Hopf * algebra in its own right. $h$ is faithful when restricted to $\mathcal{S}_0$ i.e. for $a$ in $\mathcal{S}_0$, $a=0$ whenever $h(a^*a)=0$. 
\subsection{Co-actions and quantum automorphism groups } 
\begin{defn}
	Let $\mathcal{A}$ be a unital C* algebra. A co-action of a compact quantum group $(\mathcal{S},\Delta)$ on $\mathcal{A}$ is a * homomorphism $\mathcal{A}\rightarrow \mathcal{A}\otimes\mathcal{S}$ satisfying the following conditions:
	\begin{enumerate}
		\item $(\alpha\otimes id)\alpha=(id\otimes\Delta)\alpha$.
		\item Linear span of $\alpha(\mathcal{A})(1_{\mathcal{A}}\otimes\mathcal{S})$ is norm-dense in $\mathcal{A}\otimes\mathcal{S}$.
	\end{enumerate}
\end{defn}
\begin{defn}
	A co-action of $(\mathcal{S},\Delta)$ on $\mathcal{A}$ is said to be faithful if there does not exist any proper Woronowicz C* subalgebra $\mathcal{S}_1$ of $\mathcal{S}$ such that $\alpha$ is a coaction of $\mathcal{S}_1$ on $\mathcal{A}$. A continuous linear functional $\tau$ on $\mathcal{A}$ is said to be invariant under $\alpha$ if the following holds:
	\begin{equation*}
		(\tau\otimes id)\alpha(a)=\tau(a)1_{\mathcal{S}}
	\end{equation*}
	for all $a$ in $\mathcal{S}$.
\end{defn}
In this paper we will only be considering faithful co-actions.
\par

For a unital C*algebra $\mathcal{A}$ the $\textbf{category of quatum transformation groups}$ is the category whose objects are the compact quantum groups co-acting on $\mathcal{A}$ and morphisms are quantum group homomorphisms intertwining  such co-actions, that is, for any morphism $\Phi$ between two compact quantum transformation groups $(\mathcal{S},\Delta)$ and $(\mathcal{S}',\Delta')$ the following diagram commutes:

\begin{center}
	\includegraphics[]{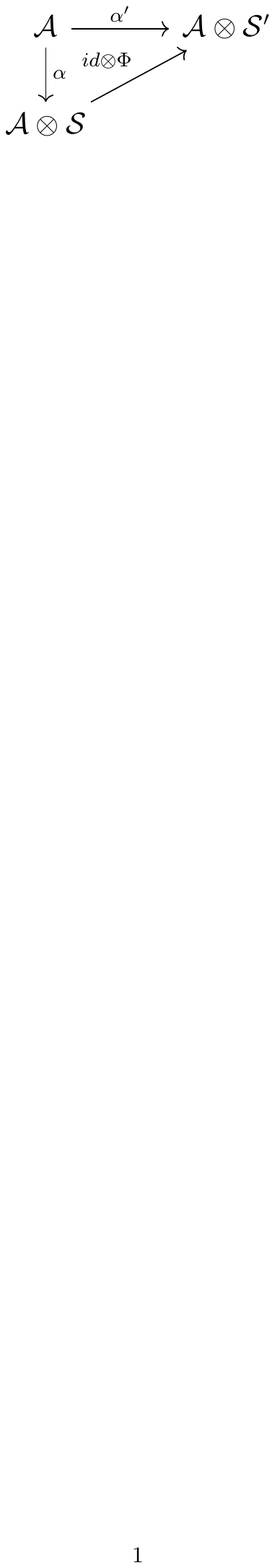}
\end{center}

where $\alpha$ and $\alpha'$ are co-actions of $(\mathcal{S},\Delta)$ and $(\mathcal{S}',\Delta')$ on $\mathcal{A}$ respectively. The universal object in this category, if it exists, is said to be the $\textbf{quantum automorphism}$ $\textbf{group}$ of $\mathcal{A}$. 

\begin{rem}
	For an arbitrary unital C* algebra the quantum automorphism group might not exist. The algebra of $n\times n$ complex matrices is such an example. However the quantum automorphism group does exist if we restrict the category to a smaller category where co-actions are trace preserving. For details, see (\cite{MR1637425}).
\end{rem}
\subsection{Quantum permutation group on a finite set}
Let $X_n=\{1,2,...,n\}$. Consider the algebra $C(X_n)$. We note that
\begin{equation*}
C(X_n):=span\{\chi_i, i=1,2,...,n\}. 
\end{equation*}
where $\chi_i$ is the characteristic function on $i$, that is, $\chi_i(k)=1$ if $k=i$ and $\chi_i(k)=0$ otherwise.
  The universal object in the $\textbf{category of quantum transformation groups}$ for the algebra $C(X_n)$ exists and Wang described it in (\cite{MR1637425}). It is called the quantum permutation group on $X_n$ and is denoted by $(S_n^+,\Delta_n)$. $S_n^+$ is the universal C* algebra generated by generators $\{u_{ij},i,j=1,2,...,n\}$ satisfying the following relations: 

\begin{enumerate}
	\item $u_{ij}^2=u_{ij}=u_{ij}^*$ for $i,j=1,2,...,n$
	\item $\sum_{i=1}^n u_{ij}=\sum_{j=1}^n u_{ij}=1$
\end{enumerate}

The co-product $\Delta:{{S_n}^+}\rightarrow{{S_n}^+}\otimes{{S_n}^+}$ is given by $\Delta(u_{ij})=\sum_{k=1}^n u_{ik}\otimes u_{kj}$
     The cannonical co-action $\alpha$ of $(S_n^+,\Delta)$ on $C(X_n)$ is given by
     \begin{equation}
     \alpha(\chi_{i})=\sum_{j=1}^n \chi_j \otimes u_{ji}; \quad i=1,2,..,n
     \end{equation}
     \par
    It is also worth mentioning that $(S_n^+,\Delta_n)$ is a compact quantum group of Kac type (see (\cite{MR1637425}) and references within) which makes the corrsponding Haar state $h:S_n^+\rightarrow\mathbb{C}$ tracial, that is, $h(ab)=h(ba)$ for all $a,b\in S_n^+$.      
     \begin{nota}
    	let $\alpha$ be a co-action of a compact quantum group $(S,\Delta)$ on $C(X_n)$. The co-representation matrix of the co-action $\alpha$   is an $S$-valued $n\times n$ matrix $Q=(q_{ij})_{n\times n}$ such that the following holds:
    \begin{equation*}
    	\alpha(\chi_i)=\sum_{j=1}^n \chi_j \otimes q_{ji}, \quad i=1,2,..,n
    \end{equation*}
As our co-actions are faithful, $S$ is generated by $q_{ij}$'s as a C* algebra.
       
    \end{nota}  
\subsection{Quantum automorphism group of a finite graph}
 let $(V,E)$ be an undirected finite graph with vertex set $V$ and edge set $E$. We also assume that $(V,E)$ has no loops and no multiple edges among any two points. Let $A=(a_{ij})_{n\times n}$ denote the adjacency matrix of $(V,E)$, that is, $a_{ij}=1$ if $(i,j)\in E$ and $a_{ij}=0$ otherwise.
 
 \begin{defn}
    A co-action $\alpha$ of $(S,\Delta)$ on $C(V)$ is said to be co-acting by preserving the quantum symmetry of $(V,E)$ if $Q$ commutes with $A$ where $Q$ is the co-representation matrix of $\alpha$. 
 \end{defn} 
Consider the category whose objects are compact quantum groups co-acting on $(V,E)$ by preserving the quantum symmetry of the graph $(V,E)$ and morphisms are quantum group morphisms intertwining two such co-actions. The following result is due to Banica in 2005. See (\cite{MR2146039}).
\begin{thm}\label{uni.obj.finite.graph}
	The universal object in the above mentioned category exists and is given by $S_n^+/UA-AU$ where $U$ denotes the corepresentation matrix of the cannonical co-action of $S_n^+$ on $C(V)$. It is called the quantum automorphism group of $(V,E)$. 
\end{thm}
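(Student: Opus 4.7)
The plan is to realize the universal object as the C*-algebraic quotient $\tilde{\mathcal S} := S_n^+/\mathcal{I}$, where $\mathcal{I}$ is the closed two-sided *-ideal of $S_n^+$ generated by the $n^2$ entries of $UA - AU$. Let $\pi : S_n^+\to \tilde{\mathcal S}$ denote the quotient map; by construction $\pi(U)$ commutes with $A$, so once $\tilde{\mathcal S}$ is verified to be a compact quantum group, the canonical co-action of $S_n^+$ on $C(V)$ will push forward to a co-action of $\tilde{\mathcal S}$ on $C(V)$ preserving the quantum symmetry of $(V,E)$.

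The central computation is to show that $\mathcal{I}$ is a Hopf ideal, so that $\Delta_n$ descends to a coproduct $\tilde\Delta$ on $\tilde{\mathcal S}$. Using $\Delta_n(u_{ij}) = \sum_k u_{ik}\otimes u_{kj}$ together with the scalar nature of $A$, one readily computes $\Delta_n((UA)_{ij}) = \sum_l u_{il}\otimes (UA)_{lj}$ and $\Delta_n((AU)_{ij}) = \sum_l (AU)_{il}\otimes u_{lj}$. Subtracting these, adding and subtracting an intermediate $\sum_l u_{il}\otimes (AU)_{lj}$, and relabelling summation indices in the residual cross terms yields the key identity
\[
\Delta_n\bigl((UA-AU)_{ij}\bigr) = \sum_{l} u_{il}\otimes (UA-AU)_{lj} + \sum_{k}(UA-AU)_{ik}\otimes u_{kj},
\]
which lies in $S_n^+\otimes\mathcal{I} + \mathcal{I}\otimes S_n^+$. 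This inclusion propagates from the generators of $\mathcal{I}$ to all of $\mathcal{I}$ by the *-homomorphism and continuity properties of $\Delta_n$, so $\tilde\Delta$ is a well-defined C*-algebra homomorphism. Coassociativity is inherited from $\Delta_n$, and the two density conditions pass from $(S_n^+, \Delta_n)$ to $(\tilde{\mathcal S}, \tilde\Delta)$ via surjectivity and continuity of $\pi\otimes\pi$; hence $(\tilde{\mathcal S}, \tilde\Delta)$ is a compact quantum group.

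For universality, suppose $(\mathcal S', \Delta')$ is any object of the category with co-representation matrix $Q = (q_{ij})$ satisfying $QA = AQ$. Since $\alpha'$ being a *-homomorphism from $C(V)$ forces the $q_{ij}$ to satisfy the defining relations of $S_n^+$, the universal property of $S_n^+$ due to Wang supplies a unique quantum group morphism $\Phi : S_n^+\to \mathcal S'$ with $\Phi(u_{ij}) = q_{ij}$. Then $\Phi(UA-AU) = QA-AQ = 0$ forces $\Phi$ to annihilate $\mathcal I$ and thus factor uniquely as $\tilde\Phi\circ\pi$, where $\tilde\Phi : \tilde{\mathcal S}\to \mathcal S'$ is the desired morphism; that $\tilde\Phi$ intertwines the co-actions is a direct verification on the generators $\chi_i$. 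The only genuinely non-formal step is the coideal identity above; the rest parallels Wang's construction of $S_n^+$ itself, with the mild subtlety of ensuring all arguments remain valid at the C*-algebra level rather than only on the dense Hopf *-subalgebra.
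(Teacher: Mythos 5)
Your proposal is correct and follows exactly the standard construction: the paper itself gives no proof of this theorem, deferring entirely to Banica's paper (and ultimately Wang's universal-property argument), and your quotient-by-the-Hopf-ideal argument, including the key coideal identity
\[
\Delta_n\bigl((UA-AU)_{ij}\bigr)=\sum_{l}u_{il}\otimes(UA-AU)_{lj}+\sum_{k}(UA-AU)_{ik}\otimes u_{kj},
\]
is precisely the argument used there. The only point worth flagging is that your remark that the $*$-homomorphism property of the co-action ``forces the $q_{ij}$ to satisfy the defining relations of $S_n^+$'' silently uses the second co-action axiom (the density/nondegeneracy condition) to obtain the row sums $\sum_j q_{ij}=1$, not merely the algebra-map property; this is a standard lemma of Wang but deserves explicit mention.
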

In fact, the adjacency matrix $A$ does not have any special role in the proof of existence of the universal object in (\cite{MR2146039}) and by arguments similar to those used in (\cite{MR2146039}) and (\cite{MR2174219}) one can obtain a version of  theorem (\ref{uni.obj.finite.graph}) for any complex valued matrix $A=(a_{ij})_{n\times n}$ indexed by the vertices. This, in particular covers the case of weighted graphs and finite metric spaces. For the convenience of the reader, let us state this as a theorem:
\begin{thm}\label{uni.obj.complex.matrix}
	Let $X_n$ be a finite set with $n$ elements and $A\in M_n(\mathbb{C})$. Let us consider the category whose objects are compact quantum groups co-acting on $C(X_n)$ in such a way that the corepresentation matrix $Q$ commutes with $A$. Then the universal object in this category exists and is given by $S_n^+/UA-AU$ where $U$ is the co-representation matrix of the cannonical co-action of $(S,\Delta)$ on $C(X_n)$.
\end{thm}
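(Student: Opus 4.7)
The plan is to adapt Banica's argument (the proof of Theorem \ref{uni.obj.finite.graph} in \cite{MR2146039}, see also \cite{MR2174219}) to the general complex-matrix setting, observing that the binary nature of the entries of $A$ plays no role---only the fact that the $a_{ij}$ are central scalars in $S_n^+$ matters. Concretely, I would let $U=(u_{ij})$ denote the fundamental corepresentation of $S_n^+$, let $\mathcal{I}$ be the two-sided closed $\ast$-ideal of $S_n^+$ generated by the $n^2$ entries of $UA-AU$, set $\mathcal{S}:=S_n^+/\mathcal{I}$, write $\pi\colon S_n^+\to\mathcal{S}$ for the quotient map, and put $q_{ij}:=\pi(u_{ij})$.

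The key non-formal step is to verify that $\mathcal{I}$ is a Hopf ideal, so that $\Delta$ descends to a $\ast$-homomorphism $\Delta_{\mathcal{S}}\colon \mathcal{S}\to\mathcal{S}\otimes\mathcal{S}$. A direct computation using $\Delta(u_{ij})=\sum_k u_{ik}\otimes u_{kj}$ and the centrality of the $a_{ij}$ gives
\[
\Delta\bigl((UA-AU)_{ij}\bigr) \;=\; \sum_{m}u_{im}\otimes (UA)_{mj}\;-\;\sum_{m}(AU)_{im}\otimes u_{mj}.
\]
Replacing $(UA)_{mj}$ by $(AU)_{mj}+(UA-AU)_{mj}$ in the first sum, then rewriting the resulting $\sum_m u_{im}\otimes (AU)_{mj}=\sum_k (UA)_{ik}\otimes u_{kj}$ (by scalarity of $A$) and replacing $(UA)_{ik}$ by $(AU)_{ik}+(UA-AU)_{ik}$, shows that the right-hand side lies in $\mathcal{I}\otimes S_n^+ + S_n^+\otimes \mathcal{I}$. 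Coassociativity and the two density conditions transfer automatically from $S_n^+$ since $\pi\otimes\pi$ is a surjective $\ast$-homomorphism intertwining $\Delta$ and $\Delta_{\mathcal{S}}$, so $(\mathcal{S},\Delta_{\mathcal{S}})$ is a compact quantum group. By construction $Q=(q_{ij})$ satisfies $QA=AQ$, and the canonical coaction $\alpha$ of $S_n^+$ on $C(X_n)$ descends to a coaction $\alpha_{\mathcal{S}}$ of $\mathcal{S}$ with corepresentation matrix $Q$.

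For the universal property, I would take an arbitrary compact quantum group $(\mathcal{T},\Delta_{\mathcal{T}})$ coacting on $C(X_n)$ via $\beta(\chi_i)=\sum_j \chi_j\otimes t_{ji}$ with $T=(t_{ij})$ satisfying $TA=AT$. Wang's theorem (\cite{MR1637425}) ensures that the $t_{ij}$ automatically obey the magic-square relations defining $S_n^+$, so its universal property yields a unique $\ast$-homomorphism $\Phi_0\colon S_n^+\to\mathcal{T}$ with $\Phi_0(u_{ij})=t_{ij}$; the identity $(\Phi_0\otimes\Phi_0)\circ\Delta=\Delta_{\mathcal{T}}\circ\Phi_0$ holds on generators and hence globally. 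Because $TA=AT$, $\Phi_0$ annihilates $\mathcal{I}$ and factors through a quantum group homomorphism $\Phi\colon\mathcal{S}\to\mathcal{T}$ intertwining $\alpha_{\mathcal{S}}$ and $\beta$; uniqueness is immediate from the fact that the $q_{ij}$ generate $\mathcal{S}$ as a C*-algebra.

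The main obstacle---really the only non-routine point---is the Hopf-ideal verification in the middle step; everything before and after is a direct transposition of the adjacency-matrix argument in \cite{MR2146039}, and the whole proof is indifferent to whether $a_{ij}\in\{0,1\}$ or $a_{ij}\in\mathbb{C}$.
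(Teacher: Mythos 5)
Your proposal is correct and follows exactly the route the paper itself indicates: the paper gives no independent proof of Theorem \ref{uni.obj.complex.matrix}, merely observing that Banica's argument from \cite{MR2146039} and \cite{MR2174219} goes through verbatim for an arbitrary complex matrix, which is precisely the adaptation you carry out (the Hopf-ideal computation for $UA-AU$ and the factorization of Wang's universal morphism through the quotient). Nothing in your argument uses more than the scalarity of the $a_{ij}$, so it is a faithful and complete version of the proof the authors leave to the reader.
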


\section{Potts Model and its quantum symmetry}
 Let us say a few words about the basics of statistical mechanics for the readers without any physics background. The fundamental idea of statistical mechanics is to consider an "ensemble" or totality of all possible states of a physical system in equilibrium and assign a probability distribution, usually the so-called Boltzmann (or Gibbs) distribution. The probability distribution (or the probability density) of a state depends on the corresponding energy level (typically given by the hamiltonian, say $H$) and the absolute temparature of the system ($T$). Usually, the probability $P(\rho)$ of a state $\rho$ is taken as a multiple of $e^{-\beta H(\rho)}$ where $\beta=1/T$. Then one considers the average expected value of the states calculated according to the above distribution. The partition function is given by 
 \begin{equation*}
 Z=\sum_{\rho}e^{-\beta H(\rho)}
  \end{equation*}
  where $\rho$ varies over all states. This is of central importance and its behavior with respect to the inverse temparature $\beta$ is studied. Any break of analyticity in $\beta$ is thought as a sign of a phase transition. However, as remarked earlier we have taken a different approach to phase transition in this paper. 
 \par
 Potts model, or more general vertex and spin models are some of the most popular and useful models arising primarily in statistical (including quantum statistical) mechanics, but they have found wide applications in many other areas of physics and even other scientific (including social sciences) disciplines. Usually, the physical picture of a Potts model considers atoms occupying the vertices of a lattice or more general graphs, each of which can be in one of a specified set of physical states. The edges joining two such atoms are thought of as bonds between them and an atom interacts only with the nearest neighbours.
 \par 
 We will be using a simpler version of Potts model for simpler  mathematical treatment. For more details on Potts model, see (\cite{martin1991}).
\par
Let $(V,E)$ be an undirected finite graph with no loops and multiple edges among two vertices. A q-state Potts model ($q\in\mathbb{N}$ and $q\geq 2$) on $(V,E)$ consists of a set of configurations $\Omega_P$ and a hamiltonian $H_P:\Omega_P\rightarrow\mathbb{C}$ defined as follows:  

\begin{defn}
A configuration $\omega$ for a q-state Potts Model on $(V,E)$ is a function from $V$ to the set $X_q$. The hamiltonian $H_P$ is defined to be: 
\begin{equation}\label{1}
H_P(\omega) := \sum_{(i,j)\in E} J_{ij}\delta_{\omega(i),\omega(j)} \qquad \text{for all}\quad \omega \in \Omega_P 
\end{equation}
where $J_{ij}\in\mathbb{C}$ and $J_{ij}=J_{ji}$ for all $i,j\in V$. The expression $\delta_{\omega(i),\omega(j)}$ is equal to $1$ if $\omega(i)=\omega(j)$ and $0$ otherwise.
\end{defn}
By taking $J'_{ij}=J_{ij}a_{ij}$ we get,
\begin{equation*}
	H_P(\omega)=\sum_{i,j\in V} J'_{ij}\delta_{\omega(i),\omega(j)}  \qquad \text{for all}\quad \omega \in \Omega_P 
\end{equation*}
Now we discuss the notion of quantum symmetry on Potts model.
\par 

Let $\alpha$ be a co-action of a compact quantum group $(\mathcal{S},\Delta)$ on $C(V)$. We want to describe what it means for $\alpha$ to preserve the hamiltonian $H_P$. Such a co-action can be described to preserve the quantum symmetry of the q-state Potts model on $(V,E)$. For our purpose, it is convenient to see a configuration $\omega$ as an element of $C(V) \otimes C^*(\mathbb{Z}_q)$ such that, 
\begin{equation*}
\omega(i)=\chi_{g_i}\qquad \text{for some}\quad g_i \in \mathbb{Z}_q.
\end{equation*}
\par
Let $\tau:C^*(\mathbb{Z}_q) \rightarrow \mathbb{C}$ be a linear functional defined by $\tau(f)=f(e)$, where $e$ is the identity of the cyclic group $\mathbb{Z}_q$. 
 Let us define a bilinear form $<,>_{H_P}$ on $C(V)\otimes C^*(\mathbb{Z}_q)$ by 
$$<f,h>_{H_P} = \sum_{i,j \in V} J'_{ij} \tau ({f(i)}^**h(j)) $$
where $f$ and $h$ are arbitrary elements in $C(V) \otimes C^*(\mathbb{Z}_q)$  and ${f(i)}^*(g)=\overline{f(i)(g^{-1})}$.
We Observe that,

\begin{align*}
<f,h>_{H_P}&=\sum_{i,j \in V} J'_{ij} \tau (({f(i)}^**h(j))\\
&=\sum_{i,j\in V}J'_{ij}\tau\big((\sum_{g_1\in \mathbb{Z}_q}\overline{f(i)(g_1)}\chi_{g_1}^*)*(\sum_{g_2\in \mathbb{Z}_q}h(j)(g_2)\chi_{g_2})\big)\\
&=\sum_{\substack{i,j\in V\\g_1,g_2 \in \mathbb{Z}_q}}J'_{ij}\overline{f(i)(g_1)}h(j)(g_2)\tau\big(\chi_{g_1^{-1}g_2}\big)\\
&=\sum_{\substack{i,j\in V\\g\in \mathbb{Z}_q}}J'_{ij}\overline{f(i)(g)}h(j)(g)\qquad (\text{as $\tau(\chi_g)=1$ iff $g=e$})\\
\end{align*}
 
Let $\omega \in \Omega_P$. We observe that,
\begin{align}
\notag<\omega,\omega>_{H_P}&=\sum_{\substack{i,j\in V\\g\in \mathbb{Z}_q}}J'_{ij}\overline{\omega(i)(g)}\omega(j)(g)\\\notag
&=\sum_{i,j\in V}J'_{ij}(\sum_{g\in\mathbb{Z}_q}\overline{\omega(i)(g)}\omega(j)(g))\\\notag
&=\sum_{i,j\in V}J'_{ij}\delta_{g_i,g_j}\qquad (\text{as $\omega(i)(g)=\chi_{g_i}$})\\\label{1.9}
&=H_P(\omega).
\end{align}

$<,>_{H_P}$ induces a $S$ valued bilinear form $\widetilde{<,>_{H_P}}$ on $C(V)\otimes C^*(\mathbb{Z}_q)\otimes S$ given by
\begin{equation*}
\widetilde{<f\otimes a,h\otimes b>_{H_P}} := <f,h>_{H_P} a^*b
\end{equation*}
\par
Let $\alpha'$ be the  coaction on $C(V)\otimes C^*(\mathbb{Z}_q)$ induced by $\alpha$. It is given by $\alpha'=(id\otimes \sigma_{23})(\alpha \otimes id)$ where $\sigma_{23}$ is the standard flip between 2nd and 3rd coordinates.
\par
\begin{defn}\label{a.0}
 $\alpha$  is said to preserve the hamiltonian of q-state Potts model on $(V,E)$ if the following holds:
\begin{equation}\label{2.00}
<\omega,\omega>_{H_P}=\widetilde{<\alpha'(\omega),\alpha'(\omega)>_{H_P}} 
\end{equation}
\end{defn} 
\begin{nota}\label{b} 
	 We introduce some notations for our convenience. Let $\beta,\gamma\in V$.
	\begin{equation*}
	Q^{\beta \gamma} := \sum_{k,l \in V} J'_{kl} q_{k\beta } q_{l\gamma }.
	\end{equation*}
Let $f\in C(V)$ be defined by, 
	\begin{equation*}
	f(\beta)=\sum_{j \in V} J'_{\beta j} = \sum_{i \in V} J'_{i \beta} \hspace{5mm} \forall \beta \in V.
	\end{equation*}  
\end{nota} 
\par
By evaluationg right hand side of equation $\eqref{2.00}$ we get,

\begin{align}
\widetilde{<\alpha'(\omega), \alpha'(\omega)>_{H_P}}&=<\sum_{i \in V}\alpha'(\chi_i\otimes\omega(i)),\sum_{j \in V}\alpha'(\chi_j\otimes\omega(j))>_{H_P}\notag\\
&=\sum_{i,j,k,l\in V}<\chi_k \otimes\omega(i), \chi_l\otimes \omega(j)>_{H_P} q_{ki}q_{lj}\notag\\
&=\sum_{\substack{i,j,k,l\in V\\g\in \mathbb{Z}_q}}J'_{kl}\overline{\omega(i)(g)}\omega(j)(g)q_{ki}q_{lj}\notag\\
&=\sum_{\substack{i,j\in V\\g\in \mathbb{Z}_q}}\overline{\omega(i)(g)}\omega(j)(g) Q^{ij} \label{2.01}
\end{align}

\begin{rem} \label{c}
	 From ($\ref{1.9}$) and ($\ref{2.01}$) it follows that  $\alpha$ preserves the hamiltonian of q-state Potts model on $(V,E)$ iff the following holds:
	\begin{equation}\label{2.1}
	\sum_{\substack{i,j\in V\\g\in \mathbb{Z}_q}}\overline{\omega(i)(g)}\omega(j)(g)J'_{ij}1=\sum_{\substack{i,j\in V\\g\in \mathbb{Z}_q}}\overline{\omega(i)(g)}\omega(j)(g)Q^{ij} \qquad \text{for all}\quad \omega \in \Omega_P 
	\end{equation}
\end{rem}

\begin{lem}\label{d}
 If the co-representation matrix $Q$ corresponding to $\alpha$ commutes with $J'$, then $\alpha$ preserves the hamiltonian $H_P$ in the sense of definition ($\ref{a.0}$).
\end{lem}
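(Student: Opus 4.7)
My plan is to prove the stronger pointwise identity $Q^{\beta\gamma} = J'_{\beta\gamma} \cdot 1_S$ for all $\beta,\gamma \in V$. Once this is established, the right-hand side of the criterion \eqref{2.1} in Remark \ref{c} collapses onto the left-hand side for \emph{every} configuration $\omega$, so $\alpha$ preserves $H_P$ in the sense of Definition \ref{a.0}.

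To obtain $Q^{\beta\gamma} = J'_{\beta\gamma}\cdot 1_S$, I would start from the definition $Q^{\beta\gamma} = \sum_{k,l} J'_{kl}\, q_{k\beta}\, q_{l\gamma}$ and feed in the hypothesis $QJ' = J'Q$ at the matrix entry $(k,\gamma)$, which reads $\sum_{l} J'_{kl}\, q_{l\gamma} = \sum_{l} q_{kl}\, J'_{l\gamma}$. Substituting (and using that each $J'_{l\gamma}$ is a complex scalar, hence central in $S$) rewrites $Q^{\beta\gamma}$ as $\sum_{k,l} q_{k\beta}\, q_{kl}\, J'_{l\gamma}$.

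The second key step is to collapse this double sum using the magic-unitary structure of $Q$. Since any faithful co-action on $C(X_n)$ factors through $S_n^+$, the entries $q_{ij}$ are projections satisfying the magic unitary relations in both rows and columns; in particular, along a fixed row $k$ one has $q_{k\beta}\, q_{kl} = \delta_{\beta l}\, q_{k\beta}$. Applying this to the expression above reduces the sum to $\sum_{k} q_{k\beta}\, J'_{\beta\gamma} = J'_{\beta\gamma} \cdot \bigl(\sum_k q_{k\beta}\bigr) = J'_{\beta\gamma}\cdot 1_S$, where the final equality uses that each column of $Q$ sums to $1_S$.

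I do not foresee any real obstacle. The one subtlety worth flagging is that the argument uses the \emph{row} orthogonality $q_{k\beta}\, q_{kl} = \delta_{\beta l}\, q_{k\beta}$ rather than the more prominently stated column relations; this row version follows from the standard fact that any finite family of projections summing to $1$ in a unital C$^*$-algebra is automatically pairwise orthogonal. The centrality of the complex scalars $J'_{kl}$ is what allows the re-ordering of factors in the computation; no estimate or density argument is needed.
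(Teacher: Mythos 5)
Your proposal is correct and follows essentially the same route as the paper: both reduce the claim to the identity $Q^{\beta\gamma}=J'_{\beta\gamma}\cdot 1_S$, insert the commutation hypothesis $\sum_{l}J'_{kl}q_{l\gamma}=\sum_{l}q_{kl}J'_{l\gamma}$ into the defining sum, and then collapse the double sum via the row orthogonality $q_{k\beta}q_{kl}=\delta_{\beta l}q_{k\beta}$ and the fact that columns of $Q$ sum to $1_S$. Your remarks on the centrality of the scalars $J'_{kl}$ and on where the magic-unitary relations come from are correct and, if anything, make the argument slightly more explicit than the paper's.
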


\begin{proof}
 Let $i,j\in V$. We observe that, 
\begin{equation*}
\begin{aligned}
Q^{ij}=\sum_{k,l \in V} J'_{kl} q_{ki} q_{lj} &= \sum_{k\in V} q_{ki} (\sum_{l\in V}J'_{kl}  q_{lj}) \\ 
							   &=\sum_{k\in V} \sum_{l\in V} J'_{lj} q_{ki} q_{kl} \\
							   &=\sum_{k\in V} J'_{ij} q_{ki}=J'_{ij}1\\
\end{aligned}
\end{equation*}

Hence from ($\ref{2.1}$), the result follows.	
\end{proof}
\par
It is interesting to ask, whether the converse is true, that is, the corepresentation matrix $Q$ of a hamiltonian preserving coaction $\alpha$ commutes with $J'$ or not. It turns out to be true. To show that, we will need two following lemmas.

\begin{lem}\label{e}
Let $\alpha$ be the co-action of $(S,\Delta)$ on $C(V)$ as before and  $h$ be the haar functional on $(S, \Delta)$. If $h(Q^{\beta\gamma})=J'_{\beta\gamma}$ for all $\beta,\gamma \in V$, then $QJ'=J'Q$, where $Q$ is the co-representation matrix of $\alpha$.
\end{lem}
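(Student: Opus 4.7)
My plan is to proceed in two stages: first upgrade the scalar hypothesis $h(Q^{\beta\gamma}) = J'_{\beta\gamma}$ to the operator-valued identity $Q^{\beta\gamma} = J'_{\beta\gamma}\cdot 1_S$, and then extract the commutation $QJ' = J'Q$ from it using the magic-unitary relations among the $q_{ij}$ that were already exploited in the proof of Lemma \ref{d}.

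For the first stage I will apply the left-invariance of the Haar state, $(h\otimes\mathrm{id})\Delta(x)=h(x)\cdot 1_S$, to $x = Q^{\beta\gamma}$. Using $\Delta(q_{k\beta})=\sum_a q_{ka}\otimes q_{a\beta}$ and multiplicativity of $\Delta$, one gets
\[
\Delta(Q^{\beta\gamma}) \;=\; \sum_{a,b\in V} \Bigl(\sum_{k,l\in V} J'_{kl}\, q_{ka}\,q_{lb}\Bigr)\otimes q_{a\beta}\,q_{b\gamma} \;=\; \sum_{a,b\in V} Q^{ab}\otimes q_{a\beta}\,q_{b\gamma}.
\]
Applying $h\otimes\mathrm{id}$, plugging in the hypothesis $h(Q^{ab})=J'_{ab}$, and setting the result equal to $h(Q^{\beta\gamma})\cdot 1_S = J'_{\beta\gamma}\cdot 1_S$ yields $\sum_{a,b}J'_{ab}\,q_{a\beta}\,q_{b\gamma} = J'_{\beta\gamma}\cdot 1_S$, i.e.\ $Q^{\beta\gamma} = J'_{\beta\gamma}\cdot 1_S$ for every $\beta,\gamma\in V$.

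For the second stage I will multiply the identity $Q^{\beta\gamma}=J'_{\beta\gamma}\cdot 1_S$ on the right by $q_{m\gamma}$ and sum over $\gamma$. On the left side the column orthogonality $q_{l\gamma}q_{m\gamma}=\delta_{lm}q_{l\gamma}$ combined with the row-sum relation $\sum_\gamma q_{m\gamma}=1$ collapses the triple sum to $\sum_k J'_{km}\,q_{k\beta}$; on the right side one simply has $\sum_\gamma J'_{\beta\gamma}\,q_{m\gamma}$. Using the symmetry $J'_{ij}=J'_{ji}$, the former is $(J'Q)_{m\beta}$ and the latter is $(QJ')_{m\beta}$, so $J'Q=QJ'$. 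The only conceptual input is the observation that Haar-invariance lets one promote the scalar identity to an $S$-valued one; the remainder is bookkeeping, and the only place one needs to be careful is in keeping column-orthogonality and row-sum applications straight so that the matrices line up to give commutation of $Q$ with $J'$ itself (rather than with $J'^T$, which coincides here but could be mixed up).
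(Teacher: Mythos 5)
Your proposal is correct and follows essentially the same route as the paper: the first stage (applying $(h\otimes\mathrm{id})$ to $\Delta(Q^{\beta\gamma})=\sum_{a,b}Q^{ab}\otimes q_{a\beta}q_{b\gamma}$ and invoking the hypothesis to get $Q^{\beta\gamma}=J'_{\beta\gamma}\cdot 1_S$) is identical to the paper's argument. The second stage differs only cosmetically — the paper substitutes $J'_{kj}1=Q^{kj}$ into $(QJ')_{ij}$ and collapses the sum with row-orthogonality and column sums, while you multiply the operator identity by $q_{m\gamma}$ and sum over $\gamma$ using column-orthogonality, row sums, and the symmetry of $J'$ — but both are the same routine use of the magic-unitary relations.
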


\begin{proof}
Let $\beta, \gamma \in V$. Then 

\begin{align}
\Delta(Q^{\beta\gamma}) &=\Delta(\sum_{k,l \in V}J'_{kl}q_{k\beta }q_{l\gamma })\notag \\
&=\sum_{k,l \in V}J'_{kl}\Delta(q_{k\beta })\Delta(q_{l\gamma })\notag \\
&=\sum_{k,l \in V}J'_{kl}(\sum_{k'\in V}q_{k k'}\otimes q_{k'\beta})(\sum_{l'\in V}q_{ll'}\otimes q_{l'\gamma}) \notag \\
&=\sum_{k,l,k',l' \in V}J'_{kl}(q_{ kk'}q_{ll'} \otimes q_{k'\beta}q_{l'\gamma}) \label{2.2}
\end{align}

As $h$ is the Haar functional, we have,
\begin{equation}
(h \otimes Id)\Delta(s)=h(s)1 \hspace{5mm} \forall s \in S
\end{equation} 
From equation$\eqref{2.2}$ we get,
\begin{align*}
(h \otimes Id)\Delta(Q^{\beta\gamma}) &=\sum_{k,l,k',l' \in V} J'_{kl}h(q_{kk'}q_{ll'})q_{k'\beta }q_{l'\gamma} \\
&=\sum_{k',l'\in V}(h(\sum_{k,l\in V}J'_{kl}q_{kk'}q_{ll'}))q_{k'\beta }q_{l'\gamma} \\
&=\sum_{k',l'\in V}q_{k'\beta}q_{l'\gamma }h(Q^{k'l'}) \\
&=\sum_{k',l'\in V}q_{k'\beta}q_{l'\gamma}J'_{k'l'}=Q^{\beta \gamma}
\end{align*}
From our hypothesis it follows that 
\begin{equation}\label{2.3}
Q^{\beta\gamma}=J'_{\beta\gamma}1.
\end{equation}

Finally we observe,
\begin{align*}
(QJ')_{ij}=\sum_{k\in V}q_{ik}J'_{kj}
&=\sum_{k\in V}q_{ik}(\sum_{k',l\in V}J'_{k'l}q_{k'k}q_{lj}) \quad \text{(from $\eqref{2.3}$)} \\
&=\sum_{k,l\in V}J'_{il}q_{ik}q_{lj} \\
&=\sum_{l\in V}J'_{il}q_{lj}=(J'Q)_{ij} 
\end{align*}

Hence we get that $Q$ and $J'$ commutes.
\end{proof}
From lemma ($\ref{d}$) and lemma ($\ref{e}$), we get the following result:
\begin{thm}\label{f}
Let $(S,\Delta)$ be a compact quantum group co-acting on $C(V)$. The co-representation matrix $Q$ commutes with $J'$ if and only if $Q^{\beta\gamma}=J'_{\beta\gamma}1$ for all $\beta, \gamma \in V$. 
\end{thm}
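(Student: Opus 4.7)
The plan is to assemble the theorem as an immediate consequence of the two preceding lemmas, since between them essentially all the work is already done. First I would observe that the computation inside the proof of Lemma \ref{d} does more than just verify condition \eqref{2.1}; it in fact shows, purely from the hypothesis $QJ' = J'Q$ together with $\sum_k q_{ki} q_{kl} = \delta_{il} 1$, that $Q^{\beta\gamma} = J'_{\beta\gamma} 1$ for every $\beta,\gamma \in V$. So I would simply quote that chain of equalities to obtain the forward implication ($\Rightarrow$) of the theorem.

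For the converse ($\Leftarrow$), I would invoke Lemma \ref{e}. Assuming $Q^{\beta\gamma} = J'_{\beta\gamma} 1$ in $\mathcal{S}$ for all $\beta,\gamma \in V$, I apply the Haar state $h$ to both sides. Because $h$ is a state with $h(1) = 1$, this yields $h(Q^{\beta\gamma}) = J'_{\beta\gamma}$, which is precisely the hypothesis of Lemma \ref{e}. Its conclusion then gives $QJ' = J'Q$, completing the equivalence.

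The argument is therefore essentially a two-line packaging: ($\Rightarrow$) is the calculation already carried out inside the proof of Lemma \ref{d}, and ($\Leftarrow$) follows by applying $h$ and then Lemma \ref{e}. I do not anticipate any genuine obstacle; the only point worth being careful about is confirming that the scalar equation $h(Q^{\beta\gamma}) = J'_{\beta\gamma}$ really is equivalent to the operator equation $Q^{\beta\gamma} = J'_{\beta\gamma} 1$ in the two directions I am using, which is immediate because applying $h$ to a scalar multiple of the identity returns exactly that scalar.
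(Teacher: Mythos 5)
Your proposal is correct and matches the paper's own (unwritten) argument exactly: the paper states that the theorem follows from Lemmas \ref{d} and \ref{e}, with the forward implication being precisely the chain of equalities in the proof of Lemma \ref{d} and the converse obtained by applying the Haar state and invoking Lemma \ref{e}. The only cosmetic remark is that the second half of the proof of Lemma \ref{e} already deduces $QJ'=J'Q$ directly from $Q^{\beta\gamma}=J'_{\beta\gamma}1$, so your detour through $h$ is harmless but not strictly needed.
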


\par

\begin{lem}\label{g}
Let $\alpha$ be a coaction of $(S,\Delta)$ on $C(V)$. If $\alpha$ preserves the hamiltonian for a q-state Potts Model on $(V,E)$, then $\alpha (f)=f\otimes 1$ where $f$ is described in notation($\ref{b}$) 
\end{lem}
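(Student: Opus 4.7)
The plan is to derive $\alpha(f)=f\otimes 1$ from the hamiltonian-preservation condition \eqref{2.1} by plugging in specific two-valued configurations, simplifying via the no-loop hypothesis on $(V,E)$ together with the column-orthogonality of the projections $q_{ij}$, and finally bridging a small index-swap by a short multiplicative argument.

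First, I would fix an arbitrary $\beta\in V$ and consider the configuration $\omega\in\Omega_P$ given by $\omega(\beta)=\chi_{g_1}$ and $\omega(i)=\chi_{g_2}$ for $i\ne\beta$, where $g_1\ne g_2$ are any two distinct elements of $\mathbb{Z}_q$ (which exist since $q\ge 2$). For this $\omega$, the indicator $\delta_{g_i,g_j}$ equals $1$ exactly when $i=j=\beta$ or both $i,j\ne\beta$, so \eqref{2.1} reads
\[J'_{\beta\beta}\cdot 1+\sum_{i,j\ne\beta}J'_{ij}\cdot 1=Q^{\beta\beta}+\sum_{i,j\ne\beta}Q^{ij}.\]
Exploiting the row/column-sum relations $\sum_i q_{ij}=1=\sum_j q_{ij}$ satisfied by the corepresentation matrix (Wang's relations, which hold for the corepresentation matrix of any coaction on $C(V)$), a routine unfolding yields $\sum_{i,j}Q^{ij}=\sum_{k,l}J'_{kl}$ and $\sum_j Q^{\beta j}=\sum_i Q^{i\beta}=\sum_k f(k)q_{k\beta}$; expanding $\sum_{i,j\ne\beta}$ on both sides and cancelling, the display collapses to
\[J'_{\beta\beta}-f(\beta)\cdot 1=Q^{\beta\beta}-\sum_k f(k)q_{k\beta}.\]

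Second, I would apply the two structural inputs. Because $(V,E)$ has no loops, $a_{\beta\beta}=0$ and hence $J'_{\beta\beta}=0$ for every $\beta$; and because $\{q_{k\beta}\}_{k\in V}$ is a family of pairwise orthogonal projections summing to $1$, we have $q_{k\beta}q_{l\beta}=\delta_{kl}q_{k\beta}$, which forces
\[Q^{\beta\beta}=\sum_{k,l}J'_{kl}q_{k\beta}q_{l\beta}=\sum_k J'_{kk}q_{k\beta}=0.\]
Plugging both vanishings back produces the key identity
\[\sum_k f(k)q_{k\beta}=f(\beta)\cdot 1_S\qquad\text{for every }\beta\in V.\]

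The last step handles an index-swap, which I expect to be the main subtlety of the argument: using the convention $\alpha(\chi_i)=\sum_j\chi_j\otimes q_{ji}$, the desired conclusion $\alpha(f)=f\otimes 1$ unpacks to $\sum_i f(i)q_{\beta i}=f(\beta)\cdot 1_S$, whose subscripts on $q$ are \emph{reversed} relative to the key identity above. To bridge this without having to invoke the antipode, I would read the key identity at an arbitrary $l\in V$ (so $\sum_k f(k)q_{kl}=f(l)\cdot 1_S$), left-multiply by $q_{\beta l}$, and use the column-$l$ orthogonality $q_{\beta l}q_{kl}=\delta_{\beta k}q_{\beta l}$ to deduce $f(\beta)q_{\beta l}=f(l)q_{\beta l}$ for all $\beta,l\in V$. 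Summing over $l$ and invoking the row-$\beta$ sum $\sum_l q_{\beta l}=1$ then gives $\sum_l f(l)q_{\beta l}=f(\beta)\cdot 1_S$, which is precisely $\alpha(f)=f\otimes 1$.
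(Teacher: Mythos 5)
Your proof is correct and follows the same route as the paper's: test the preservation identity \eqref{2.1} against a configuration that isolates a single vertex $\beta$ (the paper takes $\omega(\beta)=\chi_{g_0}$ and $\omega(i)=\chi_e$ otherwise, which is exactly your two-valued configuration), reduce both sides to sums over $i,j\neq\beta$, expand using the magic-unitary relations, and compare. You are more careful than the paper in two places, and the second is a genuine improvement rather than mere pedantry. First, you explicitly kill the diagonal contributions $J'_{\beta\beta}$ and $Q^{\beta\beta}=\sum_k J'_{kk}q_{k\beta}$ using the no-loop hypothesis; the paper silently omits these terms from its decomposition. Second, the identity that actually falls out of this computation is $\sum_k f(k)q_{k\beta}=f(\beta)1$, with the summation running over the \emph{first} index of $q$, whereas $\alpha(f)=f\otimes 1$ unpacks (given the convention $\alpha(\chi_i)=\sum_j\chi_j\otimes q_{ji}$) to \eqref{2.31}, $\sum_i f(i)q_{\beta i}=f(\beta)1$, with the summation over the \emph{second} index. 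The paper's proof ends by asserting \eqref{2.31} without addressing this transposition; your bridge --- multiplying $\sum_k f(k)q_{kl}=f(l)1$ by $q_{\beta l}$, using orthogonality within a column to get $f(\beta)q_{\beta l}=f(l)q_{\beta l}$, and summing over $l$ --- closes that gap using only the magic-unitary relations that the paper already invokes freely elsewhere (and, as a bonus, shows both the row and column forms of the identity hold, both of which are used later in the paper). So: same approach, correctly executed, with a small but real hole in the paper's own argument patched.
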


\begin{proof}
 To show that $\alpha(f) = f\otimes 1$, it is enough to show

\begin{equation}\label{2.31}
 \sum_{i\in V}f(i)q_{\beta i}= f(\beta)1 
\end{equation}
for all $\beta\in V$.
\par
Let us fix $\beta$ in $V$ and $g_0 \in \mathbb{Z}_q$ such that $g_{0} \neq e$. We define $\omega:V \rightarrow C^*(\mathbb{Z}_q)$ by $\omega(\beta)=\chi_{g_0}$ and $\omega(i) = \chi_{e}$ for $i\neq \beta$.
For $\omega$, we evaluate right hand side of equation $\eqref{2.1}$ as follows:
\begin{align*}
	\sum_{\substack{i,j\in V\\g\in \mathbb{Z}_q}}\overline{\omega(i)(g)}\omega(j)(g)Q^{ij}&=\sum_{\substack{i\in V\\g\in \mathbb{Z}_q}}\overline{\omega(i)(g)}\omega(\beta)(g)Q^{i\beta}+\sum_{\substack{j\in V\\g\in \mathbb{Z}_q}}\overline{\omega(\beta)(g)}\omega(j)(g)Q^{\beta j}\\&\:\:\:\:\:+\sum_{\substack{i,j\neq \beta\\g\in \mathbb{Z}_q}}\overline{\omega(i)(g)}\omega(j)(g)Q^{ij}\\
	&=\sum_{i,j\neq \beta}Q^{ij}\\
	&=\sum_{i,j\neq \beta}\sum_{k,l\in V}J'_{kl}q_{ki}q_{lj}\\
	&= \sum_{k,l\in V} J'_{kl}(1 - q_{k\beta })(1-q_{l\beta }) \\
	&=\sum_{k,l\in V}J'_{kl}(1-q_{k\beta }-q_{l\beta }) \\
	&=\sum_{k\in V} f(k)-2\sum_{l\in V} f(l)q_{l\beta }.
	\end{align*}

For $\omega$, evaluating left hand side of equation$\eqref{2.1}$ we get,
\begin{align*}
\sum_{\substack{i,j\in V\\g\in \mathbb{Z}_q}}J'_{ij}\overline{\omega(i)(g)}\omega(j)(g)&=\sum_{\substack{i\in V\\g\in \mathbb{Z}_q}}J'_{i\beta}\overline{\omega(i)(g)}\omega(\beta)(g)+\sum_{\substack{j\in V\\g\in \mathbb{Z}_q}}J'_{\beta j}\overline{\omega(\beta)(g)}\omega(j)(g)\\
&\:\:\:\: +\sum_{\substack{i,j\neq\beta\\g\in \mathbb{Z}_q}}J'_{ij}\overline{\omega(i)(g)}\omega(j)(g)\\
&=\sum_{i,j \neq \beta}J'_{ij}\\
&=\sum_{j\neq\beta}(f(j)-J'_{\beta j})\\
&=\sum_{j\in V}f(j)-2f(\beta)\\
\end{align*}
By our hypothesis and remark ($\ref{c}$) we know that  equation (\ref{2.1}) holds. Hence, we conclude that (\ref{2.31}) is true.
\end{proof}
Now we are in a position to prove the main result of this paper.

\begin{thm}\label{h}
	Let $\alpha$ is a coaction of $(S,\Delta)$ on $C(V)$. If $\alpha$ preserves the hamiltonian of q-state Potts model on $(V,E)$, then the co-representation matrix $Q$ of $\alpha$ commutes with $J'$.
\end{thm}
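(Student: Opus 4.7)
\textbf{Strategy.} By Lemma~\ref{e}, it suffices to prove $h(Q^{\beta\gamma}) = J'_{\beta\gamma}$ for every $\beta,\gamma \in V$, where $h$ is the Haar state of $(S,\Delta)$. The plan is to apply $h$ to the identity~\eqref{2.1} of Remark~\ref{c}: using $\sum_{g\in\mathbb{Z}_q}\overline{\omega(i)(g)}\omega(j)(g) = \delta_{g_i,g_j}$, the preservation hypothesis collapses to
\[
\sum_{\substack{i,j \in V \\ g_i = g_j}} N_{ij} \;=\; 0 \qquad \text{for every } g : V \to \mathbb{Z}_q,
\]
where $N_{\beta\gamma} := h(Q^{\beta\gamma}) - J'_{\beta\gamma}$. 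The task then becomes to show $N \equiv 0$.

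\textbf{Symmetry and vanishing diagonal.} Two structural observations about $N$ come for free. First, $\alpha$ factors through Wang's universal quantum permutation group $S_n^+$, so $Q$ is a magic unitary and in particular satisfies the column orthogonality $q_{k\beta}q_{l\beta} = \delta_{kl}\,q_{k\beta}$; together with the no-loops hypothesis $J'_{kk} = 0$ this yields $Q^{\beta\beta} = \sum_k J'_{kk}\,q_{k\beta} = 0$, so $N_{\beta\beta} = 0$. Second, being generated by a magic unitary makes $S$ of Kac type (the antipode sends $q_{ij}$ to $q_{ji}$ and squares to the identity), so $h$ is tracial; combined with $J'_{kl} = J'_{lk}$, a short renaming of summation indices inside $\sum_{k,l} J'_{kl}\,h(q_{k\beta}q_{l\gamma})$ produces $h(Q^{\beta\gamma}) = h(Q^{\gamma\beta})$, so $N$ is symmetric.

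\textbf{Off-diagonal vanishing.} I now exploit only the $2$-colouring configurations (available since $q \geq 2$): for each $W \subseteq V$ take $g_i = 1$ if $i \in W$ and $g_i = 0$ otherwise. Using symmetry of $N$ and $N_{ii}=0$, the scalar identity becomes
\[
\sum_{\{i,j\} \subseteq W} N_{ij} \;+\; \sum_{\{i,j\} \subseteq V \setminus W} N_{ij} \;=\; 0 \qquad (W \subseteq V).
\]
Taking $W = V \setminus \{\beta\}$ gives $\sum_{\{i,j\}\subseteq V\setminus\{\beta\}} N_{ij} = 0$; comparing this with $W = \{\beta,\gamma\}$ yields $N_{\beta\gamma} = \sum_{j \neq \beta, \gamma} N_{\gamma j}$, and symmetry of $N$ rewrites this as $2N_{\beta\gamma} = \sum_{j \neq \gamma} N_{\gamma j}$. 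The right-hand side is independent of $\beta$, so $N_{\beta\gamma}$ depends only on $\gamma$; symmetry of $N$ then propagates this to make every off-diagonal entry a single universal constant $c$. Finally $W = V$ forces $\binom{|V|}{2}\,c = 0$, whence $c = 0$ and $N \equiv 0$. Lemma~\ref{e} concludes.

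\textbf{Main obstacle.} The genuinely non-trivial point is that $Q^{\beta\gamma}$ and $Q^{\gamma\beta}$ are in general distinct elements of the noncommutative algebra $S$ (the $q_{ij}$ do not commute), so no useful $(\beta,\gamma)$-symmetry is visible in~\eqref{2.1} itself. The decisive step is therefore the passage to the Haar state, which symmetrises everything precisely because $h$ is tracial on the Kac-type $S$; without traciality the subsequent combinatorial reductions simply do not work. Once symmetry of $N$ is in hand, the remaining argument is a classical fact about symmetric zero-diagonal matrices whose pair-sums over complementary subsets vanish, and is essentially routine.
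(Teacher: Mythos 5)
Your proof is correct, and it rests on the same essential ingredients as the paper's argument --- the test configurations that recolour one or two vertices, the traciality of the Haar state $h$ on $S$ (valid because $Q$ is a magic unitary, so $S$ is a quotient of $S_n^+$ and hence of Kac type), and the final reduction to Lemma~\ref{e} --- but it organizes them in a genuinely different way. The paper works inside the algebra $S$: it first establishes Lemma~\ref{g} (i.e.\ $\alpha(f)=f\otimes 1$, obtained from the one-vertex recolouring) and then uses the two-vertex recolouring to derive the operator identity $Q^{\beta\gamma}+Q^{\gamma\beta}=(J'_{\beta\gamma}+J'_{\gamma\beta})1$, invoking $h$ and traciality only at the very last step. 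You instead apply $h$ to \eqref{2.1} at the outset and work entirely with the scalar matrix $N_{\beta\gamma}=h(Q^{\beta\gamma})-J'_{\beta\gamma}$; your two structural observations are sound ($N$ is symmetric by traciality together with the symmetry of $J'$, and $N_{\beta\beta}=0$ by column orthogonality of the magic unitary together with $J'_{kk}=0$), and the identities $\sum_{\{i,j\}\subseteq W}N_{ij}+\sum_{\{i,j\}\subseteq V\setminus W}N_{ij}=0$, specialized to $W=V\setminus\{\beta\}$, $W=\{\beta,\gamma\}$ and $W=V$, do force $N\equiv 0$. This bypasses Lemma~\ref{g} and the operator identity entirely; the price is the extra combinatorial step showing that all off-diagonal entries of $N$ equal a common constant, which the constant colouring $W=V$ then kills, whereas the paper obtains $N_{\beta\gamma}+N_{\gamma\beta}=0$ directly from its two evaluations. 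The paper's route yields the slightly stronger operator-level statement $Q^{\beta\gamma}+Q^{\gamma\beta}=(J'_{\beta\gamma}+J'_{\gamma\beta})1$; yours is more elementary and self-contained once the Haar state has been applied. Both are valid proofs of the theorem.
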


\begin{proof}
 We fix $\beta,\gamma \in V$ such that $\beta\neq\gamma$ and $g_0\in\mathbb{Z}_q$ which is not $e$. We define a configuration $\omega:V \rightarrow C^*(\mathbb{Z}_q)$ by
	\begin{equation*}
		\begin{aligned}
		\omega(\beta)&=\chi_{g_0},\\ \omega(\gamma)&=\chi_{g_0},\\  \omega(i)&=\chi_e\quad \text{for}\quad i\neq\beta,\gamma.\\
	\end{aligned}
	 \end{equation*}.
	\par
	Evaluating right hand side of equation$\eqref{2.1}$ for $\omega$ we get,
	
	\
	\begin{align}
	\notag\sum_{\substack{i,j\in V\\g\in \mathbb{Z}_q}}\overline{\omega(i)(g)}\omega(j)(g)Q^{ij}=&\sum_{\substack{j\neq\beta,\gamma\\g\in\mathbb{Z}_q}}\overline{\omega(\beta)(g)}\omega(j)(g)Q^{\beta j}+\sum_{\substack{j\neq\beta,\gamma\\g\in\mathbb{Z}_q}}\overline{\omega(\gamma)(g)}\omega(j)(g)Q^{\gamma j}\\\notag
	&+\sum_{\substack{i\neq\beta,\gamma\\g\in\mathbb{Z}_q}}\overline{\omega(i)(g)}\omega(\beta)(g)Q^{i\beta}+\sum_{\substack{i\neq\beta,\gamma\\g\in\mathbb{Z}_q}}\overline{\omega(i)(g)}\omega(\gamma)(g)Q^{i\gamma}\\\notag
	&+\sum_{\substack{i\neq\beta,\gamma\\j\neq\beta,\gamma\\ g\in\mathbb{Z}_q}}\overline{\omega(i)(g)}\omega(j)(g)Q^{ij}+Q^{\beta\gamma}+Q^{\gamma\beta}\\\notag
	=&\sum_{\substack{i\neq\beta,\gamma\\j\neq\beta,\gamma}}Q^{ij}+Q^{\beta\gamma}+Q^{\gamma\beta}\\\notag
	=&\sum_{k,l\in V}J'_{kl}(\sum_{\substack{i\neq\beta,\gamma\\j\neq\beta,\gamma}}q_{ki}q_{lj})+Q^{\beta\gamma}+Q^{\gamma\beta}\\\notag
	=&\sum_{k,l\in V}J'_{kl}(1-q_{k\beta}-q_{k\gamma })(1-q_{l\beta}-q_{l\gamma })+Q^{\beta\gamma}+Q^{\gamma\beta}\\\notag
	=&\sum_{k,l \in V}J'_{kl}(1-q_{k\beta}-q_{k\gamma }-q_{l\beta}+q_{k\gamma }q_{l\beta }-q_{l\gamma }+q_{k\beta}q_{l\gamma })\\\notag&\:\:+Q^{\beta\gamma}+Q^{\gamma\beta}\\\notag
	=&\sum_{k\in V} f(k)-f(\beta)1-f(\gamma)1-f(\beta)1+Q^{\gamma\beta}-f(\gamma)1\\\notag&\:\:+Q^{\beta\gamma}+Q^{\beta\gamma}+Q^{\gamma\beta}\\\label{2.4}
	=&\sum_{k\in V}f(k)-2f(\beta)1-2f(\gamma)1+2Q^{\beta\gamma}+2Q^{\gamma\beta}
	\end{align}
    
	By evaluating left hand side of equation (\ref{2.1}) for $\omega$  we get,
	\begin{align}
	\notag\sum_{\substack{i,j\in V\\g\in \mathbb{Z}_q}}\overline{\omega(i)(g)}\omega(j)(g)J'_{ij}=&\sum_{\substack{j\neq\beta,\gamma\\g\in\mathbb{Z}_q}}\overline{\omega(\beta)(g)}\omega(j)J'_{\beta j}+\sum_{\substack{j\neq\beta,\gamma\\g\in\mathbb{Z}_q}}\overline{\omega(\gamma)(g)}\omega(j)(g)J'_{\gamma j}\\\notag
	&+\sum_{\substack{i\neq\beta,\gamma\\g\in\mathbb{Z}_q}}\overline{\omega(i)(g)}\omega(\beta)(g)J'_{i\beta}+\sum_{\substack{i\neq\beta,\gamma\\g\in\mathbb{Z}_q}}\overline{\omega(i)(g)}\omega(\gamma)(g)J'_{i\gamma}\\\notag
	&+\sum_{\substack{i\neq\beta,\gamma\\j\neq\beta,\gamma\\\notag g\in\mathbb{Z}_q}}\overline{\omega(i)(g)}\omega(j)(g)J'_{ij}+J'_{\beta\gamma}+J'_{\gamma\beta}\\\notag
	=&\sum_{\substack{i\neq\beta,\gamma\\j\neq\beta,\gamma}}J'_{ij}+J'_{\beta\gamma}+J'_{\gamma\beta}\\\notag
	=&\sum_{i\neq\beta,\gamma}(f(i)-J'_{i\beta}-J'_{i\gamma})+J'_{\beta\gamma}+J'_{\gamma\beta}\\\notag
	=&\sum_{i\neq\beta,\gamma}f(i)-\sum_{i\neq\beta,\gamma}J'_{i\beta}-\sum_{i\neq\beta,\gamma}J'_{i\gamma}+J'_{\beta\gamma}+J'_{\gamma\beta}\\\notag
	=&\sum_{i\in V}f(i)-f(\beta)-f(\gamma)-f(\beta)+J'_{\gamma\beta}-f(\gamma)+J'_{\beta\gamma}\\\notag&\:\:+J'_{\beta\gamma}+J'_{\gamma\beta}\\\label{2.41}
	=&\sum_{i\in V}f(i)-2f(\beta)-2f(\gamma)+2J'_{\beta\gamma}+2J'_{\gamma\beta}
	\end{align}

	From our hypothesis and remark (\ref{c}) we know that equation ($\ref{2.1}$) holds. Hence, from $\eqref{2.4}$ and $\eqref{2.41}$ and lemma (\ref{g}) we get,
	\begin{align*}
	Q^{\beta\gamma}+Q^{\gamma\beta}&=(J'_{\beta\gamma}+J'_{\gamma\beta})1\\
	\text{which implies},\qquad h(Q^{\beta\gamma})&=J'_{\beta\gamma}1.\qquad (\text{as $h$ is tracial on $S$})
	\end{align*}
	Since our choice of $\beta$, $\gamma$ was arbitrary, from Lemma($\ref{e}$) the theorem follows.
	
\end{proof}

\par
\begin{lem}\label{h.1}
	let $\alpha$ be a co-action of a compact quantum group $(S,\Delta)$ on $C(V)$ which preserves the hamiltonian of a q-state Potts model on $(V,E)$. Let $k\in\mathbb{C}$ and $K:=span \{\chi_v\in C(V)| f(v)=k \}$. Then $\alpha(K)\subseteq K\otimes S$. 
\end{lem}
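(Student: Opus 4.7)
The plan is to leverage Lemma~\ref{g}, which tells us that $\alpha(f) = f \otimes 1$, and to bootstrap this to the spectral projections of $f$ via functional calculus on the finite-dimensional commutative $C^*$-algebra $C(V)$. Once the spectral projections are shown to be fixed by $\alpha$, the invariance of the eigenspace $K$ will follow by a one-line multiplicative argument.

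First, I would observe that since $\alpha$ is a $*$-homomorphism, $\alpha(f) = f \otimes 1$ implies $\alpha(p(f)) = p(f) \otimes 1$ for every polynomial $p \in \mathbb{C}[t]$. Let $k_1, \ldots, k_m$ be the distinct values attained by $f$ on $V$, and let $e_k := \sum_{v : f(v) = k} \chi_v$ be the indicator of the level set $\{f = k\}$. Because the level sets of $f$ are determined by the spectrum of $f$ in the finite-dimensional commutative algebra $C(V)$, Lagrange interpolation gives
\begin{equation*}
e_k \;=\; \prod_{k_i \neq k} \frac{f - k_i \cdot 1}{k - k_i},
\end{equation*}
so $e_k$ is a polynomial in $f$. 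Consequently $\alpha(e_k) = e_k \otimes 1$.

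Now fix $v \in V$ with $f(v) = k$. Since $e_k \chi_v = \chi_v$, applying $\alpha$ and using multiplicativity yields
\begin{equation*}
\alpha(\chi_v) \;=\; \alpha(e_k \chi_v) \;=\; \alpha(e_k)\alpha(\chi_v) \;=\; (e_k \otimes 1)\,\alpha(\chi_v).
\end{equation*}
The range of left-multiplication by $e_k \otimes 1$ on $C(V) \otimes S$ is precisely $e_k C(V) \otimes S = K \otimes S$, so $\alpha(\chi_v) \in K \otimes S$. As the $\chi_v$ with $f(v) = k$ span $K$, this gives $\alpha(K) \subseteq K \otimes S$.

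There is no real obstacle here beyond correctly invoking Lemma~\ref{g}: the argument is a standard consequence of the fact that an equivariance relation for a single element of a finite-dimensional commutative $C^*$-algebra automatically propagates to its spectral projections. The only point requiring mild care is that $f$ may be complex-valued (since the coupling constants $J'_{ij}$ are allowed to be complex), but this does not obstruct Lagrange interpolation, which works over $\mathbb{C}$ just as well as over $\mathbb{R}$.
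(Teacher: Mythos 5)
Your proof is correct, but it follows a genuinely different route from the paper's. The paper also starts from Lemma (\ref{g}) ($\alpha(f)=f\otimes 1$), but then works entrywise: for $v$ with $f(v)=k$ and $w$ with $f(w)\neq k$ it computes $q_{vw}f(v)=q_{vw}\sum_{u\in V}f(u)q_{vu}=q_{vw}f(w)$, using the orthogonality of the projections $q_{vu}$ along a row of the magic unitary, and concludes $q_{vw}=0$; the inclusion $\alpha(K)\subseteq K\otimes S$ then follows by inspecting the corepresentation matrix. You instead pass to the spectral projection $e_k$ of the level set $\{f=k\}$ via Lagrange interpolation, deduce $\alpha(e_k)=e_k\otimes 1$ from multiplicativity of $\alpha$, and read off the invariance of $K=e_kC(V)$ from $\alpha(\chi_v)=(e_k\otimes 1)\alpha(\chi_v)$. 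Both arguments ultimately rest on the same two facts (Lemma (\ref{g}) and the $*$-homomorphism property of $\alpha$), but yours is more structural — it shows directly that every spectral projection of a fixed element is fixed, which gives the conclusion for all level sets of $f$ at once and generalizes verbatim to any coaction fixing any normal element of a finite-dimensional commutative algebra — while the paper's computation is more elementary and yields the slightly sharper entrywise statement $q_{vw}=0$ explicitly, which the authors then reuse in the examples. Your remark that complex-valuedness of $f$ is harmless is also correct, since Lagrange interpolation works over $\mathbb{C}$ and $f$ is a normal element of $C(V)$.
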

\begin{proof}
	From lemma (\ref{g}), it follows that $\alpha(f)=f\otimes 1$ where $f$ is described in notation (\ref{b}). Hence we have,
	\begin{equation*}\label{x}
	\sum_{i\in V}f(i)q_{ji}=f(j)1 \qquad \text{for all}\quad j\in V.
	\end{equation*} 
	Now we choose $v\in V$ such that $f(v)=k$ and $w\in V$ such that $f(w)\neq k$. To prove our claim it is enough to show that $q_{vw}=0$.
	We observe that,
	\begin{equation*}
	q_{vw}f(v) = q_{vw}\sum_{k\in V}f(k)q_{vk}=q_{vw}f(w)\\
	\end{equation*}
	As $f(v)\neq f(w)$, it follows that $q_{vw}=0$.
\end{proof}
\begin{thm}\label{i}
 Let $(S,\Delta)$ be a compact quantum group which  co-acts on $C(V)$ via $\alpha$ by preserving the hamiltonian of the q-state Potts Model on $(V,E)$. We define $\alpha^{(2)}$ as
\begin{equation*}
	\alpha ^{(2)}= (id\otimes id\otimes m)(id\otimes\sigma_{23}\otimes id)(\alpha\otimes\alpha)
	\end{equation*} 
	where $m$ is the multiplication on $S$ and $\sigma_{23}$ is the standard flip on second and third coordinate. Let $K_c:= Span\{\chi_k \otimes \chi_l| J'_{kl}=c\}$. Then $\alpha^{(2)}(K_c) \subseteq K_c \otimes S$.  
\end{thm}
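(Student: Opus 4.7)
The plan is to unwind the definition of $\alpha^{(2)}$ on a basis element of $C(V)\otimes C(V)$ and reduce the claim to an identity among products of corepresentation matrix entries, which I would then derive from Theorems (\ref{h}) and (\ref{f}). A direct computation, tracking the action of $\sigma_{23}$ on the middle two tensor legs and of $m$ on the last two $S$-legs, gives
\[
\alpha^{(2)}(\chi_k\otimes\chi_l)=\sum_{a,b\in V}\chi_a\otimes\chi_b\otimes q_{ak}q_{bl}.
\]
Therefore the inclusion $\alpha^{(2)}(K_c)\subseteq K_c\otimes S$ is equivalent to the vanishing $q_{ak}q_{bl}=0$ whenever $J'_{kl}=c$ and $J'_{ab}\neq c$.

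To establish this vanishing, the key tool is the identity
\[
Q^{kl}=\sum_{a,b\in V}J'_{ab}\,q_{ak}q_{bl}=J'_{kl}\cdot 1,
\]
which follows by combining Theorem (\ref{h}) (the corepresentation matrix $Q$ commutes with $J'$) with Theorem (\ref{f}) (which translates this commutation into $Q^{\beta\gamma}=J'_{\beta\gamma}1$). I would sandwich this identity by multiplying on the left by $q_{a_0 k}$ and on the right by $q_{b_0 l}$. The right-hand side becomes $J'_{kl}\,q_{a_0 k}q_{b_0 l}$. On the left-hand side, the magic unitary relations satisfied by $Q$, inherited from the universality of $(S_n^+,\Delta_n)$ described in the preliminaries, yield the column orthogonalities $q_{a_0 k}q_{a'k}=\delta_{a_0 a'}\,q_{a_0 k}$ and $q_{b'l}q_{b_0 l}=\delta_{b'b_0}\,q_{b_0 l}$; these collapse the double sum to the single term $(a',b')=(a_0,b_0)$, giving $J'_{a_0 b_0}\,q_{a_0 k}q_{b_0 l}$. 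Equating the two expressions then yields $(J'_{a_0 b_0}-J'_{kl})\,q_{a_0 k}q_{b_0 l}=0$, so whenever $J'_{a_0 b_0}\neq J'_{kl}=c$ we conclude $q_{a_0 k}q_{b_0 l}=0$, which is exactly what we need.

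The argument is essentially algebraic, and I do not expect a substantial obstacle: Theorem (\ref{h}) does the heavy lifting, and the remainder is careful bookkeeping with the tensor legs in the definition of $\alpha^{(2)}$ together with the column-orthogonality relations in both columns $k$ and $l$ of $Q$. The mildly delicate point is recognising that one must multiply by $q_{a_0 k}$ on the left and by $q_{b_0 l}$ on the right (rather than both on the same side), so that the collapse of the double sum produces the single surviving coefficient $J'_{a_0 b_0}$ rather than an uncontrolled combination.
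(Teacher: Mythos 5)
Your proposal is correct, but it proves the key vanishing statement $q_{ak}q_{bl}=0$ (for $J'_{ab}\neq J'_{kl}$) by a genuinely different and more direct route than the paper. The paper starts from the same identity $Q^{ij}=J'_{ij}1$ but then applies the Haar state: it first assumes $J'$ real, orders the distinct values $k_1<\dots<k_r$ of $J'$, shows $\sum_{k,l}(J'_{i_0j_0}-J'_{kl})\,h(q_{ki_0}q_{lj_0})=0$ for a pair attaining the maximal value, invokes positivity $h(q_{ki}q_{lj})=h(q_{ki}q_{lj}q_{ki})\geq 0$ (using traciality of $h$) to kill each term, uses faithfulness of $h$ on the dense Hopf $*$-algebra to upgrade $h(q_{ki}q_{lj})=0$ to $q_{ki}q_{lj}=0$, runs a downward induction over the remaining values, and finally treats complex $J'$ by splitting into real and imaginary parts. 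Your sandwich argument $q_{a_0k}\,Q^{kl}\,q_{b_0l}=J'_{a_0b_0}q_{a_0k}q_{b_0l}=J'_{kl}\,q_{a_0k}q_{b_0l}$ replaces all of this with a one-line algebraic computation that works verbatim for complex $J'$, needing neither the Haar state, nor positivity, nor induction, nor the real/imaginary decomposition. The one ingredient you should make explicit is the column orthogonality $q_{a_0k}q_{ak}=\delta_{a_0a}q_{a_0k}$: this follows because the corepresentation matrix of any compact quantum group coaction on $C(V)$ is a magic unitary (both row and column sums equal $1$, by Wang's results cited in the preliminaries), so the entries of a fixed column are mutually orthogonal projections; the paper uses this same fact implicitly elsewhere (e.g.\ $\sum_{k,l}q_{ki}q_{lj}=1$ in Lemma~\ref{j}), so it is available to you. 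With that justification supplied, your proof is complete and is arguably cleaner than the published one.
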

\begin{proof}
	Let $i,j \in V$. We have
	\begin{equation*}
	 \alpha^{(2)}(\chi_i \otimes \chi_j)= \sum_{k,l \in V}\chi_k\otimes \chi_l \otimes q_{ki}q_{lj}  
	\end{equation*}
	Our claim is equivalent to the statement that 
	  \begin{equation}\label{3}
	  q_{ki}q_{lj}=0 \quad\text{if}\quad J'_{kl}\neq J'_{ij}.
	  \end{equation}
	\par   
	 As $\alpha$ preserves the hamiltonian on $(V,E)$, from theorem($\ref{h}$) and remark(${\ref{f}}$) it follows that, \begin{equation*}
	 Q^{ij}=J'_{ij}1 \qquad \text{for all $i,j$ in $V$}.
	 \end{equation*}
	 \par
	 Let us first assume that $J'$ to be a real matrix. Let $\phi : V \times V \rightarrow \mathbb{R}$ be defined by $\phi(i,j)=J'_{ij}$. Let $Image(\phi)=\{k_1,k_2,...,k_r\}$ where $k_1<k_2<...<k_r \in \mathbb{R}$. Let $(i_0,j_0)\in V\times V$ such that $J'_{i_0,j_0}=k_r$. We note that,
	 \begin{equation}\label{4}
	 	\sum_{k,l \in V}(J'_{i_0j_0}-J'_{kl})h(q_{ki_0}q_{lj_0})
	 	=h(J'_{i_0j_0}1-Q^{i_0j_0})=0
	 \end{equation}
	 where $h$ is the Haar state on $(S,\Delta)$.We Observe that
	 \begin{equation}\label{5}
	 	h(q_{ki}q_{lj})=h(q_{ki}q_{lj}q_{ki}) \geq 0 \qquad \text{for all}\quad i,j,k,l \in V. 
	 \end{equation}
	 As $(J'_{i_0j_0}-J'_{kl})\geq 0$, from equation (\ref{4})  and equation (\ref{5})  it follows that, for any pair $(k,l)\in V \times V$ with $J'_{i_0j_0}\neq J'_{kl} $, $h(q_{ki_0}q_{lj_0})=0$. As $h$ is faithful on the underlying dense Hopf * Algebra of $(S,\Delta)$, it follows that $q_{ki_0}q_{lj_0}=0$ for any pair $(k,l)\in V \times V$ with $J'_{i_0j_0}\neq J'_{kl} $. 
	 \par
	 We have proved equation (\ref{3}) to be true for a pair $(i,j)\in V\times V$ with $J'_{ij}=k_r$. For other $k_n$'s we proceed by induction. We fix $k_n\in Image(\Phi)$ and assume that,
	 \begin{equation*}
	 	q_{ki}q_{lj}=0 \quad \text{where}\quad J'_{ij}=k_n\quad\text{and}\quad J'_{ij}\neq J'_{kl}.
	 \end{equation*}. We want to see whether the above statement holds true for $k_{n-1}$. We choose $i,j\in V$ such that $J'_{ij}=k_{n-1}$. For $k,l\in V$, if $J'_{kl}>J'_{ij}$, then $q_{ki}q_{lj}=0$ by our induction hypothesis. As before, we observe that,
	 \begin{equation}\label{6}
	 	\sum_{\substack{k,l\in V\\\text{such that} J'_{kl}\leq J'_{ij}}}(J'_{ij}-J'_{kl})h(q_{ki}q_{lj})=\sum_{k,l \in V}(J'_{ij}-J'_{kl})h(q_{ki}q_{lj})
	 	=h(J'_{ij}1-Q^{ij})=0
	 \end{equation} 
	 From (\ref{6}), it follows that,  for $k,l\in V$ with $J'_{kl}< J'_{ij} $, $h(q_{ki}q_{lj})=0$, which in turn implies that $q_{ki}q_{lj}=0$. The theorem is proved when the matrix $J'$ is real.
	 \par
	 Now let us assume that $J'=J'_R + iJ'_I$ where $J'_R$ and $J'_I$ are real matrices. As the co-representation matrix $Q$ consists of positive elements in $S$, it follows that $Q$ commutes with $J'$ iff $Q$ commutes with both $J'_R$ and $J'_I$. Using the real case we conclude that for $i,j,k,l \in V$,
	 \begin{align*}
	 	q_{ki}q_{lj}&=0 \qquad \text{if}\quad J'_{R_{ij}}\neq J'_{R_{kl}}\\
	 	q_{ki}q_{lj}&=0 \qquad \text{if}\quad J'_{I_{ij}}\neq J'_{I_{kl}}
	 \end{align*}
	 As $J'_{ij}=J'_{R_{ij}}+iJ'_{I_{ij}}$, it follows that $q_{ki}q_{lj}=0$ if $J'_{ij}\neq J'_{kl}$ for $i,j,k,l\in V$.
	 \end{proof}
 \begin{lem}\label{j}
 	Let $\alpha$ be a coaction of $S$ on $C(V)$ preserving the hamiltonian $H_P$ on $(V,E)$. It is immidiate that $a_{ij}=0$ implies $J'_{ij}=0$. If we further assume that $J'_{ij}=0$ implies $a_{ij}=0$, then $\alpha$ also preserves the quantum symmetry of the underlying graph $(V,E)$, that is, the co-representation matrix $Q$ commutes with the adjacency matrix $A$ of $(V,E)$.  
 \end{lem}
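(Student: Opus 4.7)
The plan is to extract the commutation $QA=AQ$ from the commutation $QJ'=J'Q$ (which is already known by Theorem \ref{h}), using the fine-grained vanishing relations supplied by Theorem \ref{i}.

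First, I would reformulate the extra hypothesis as a statement about level sets. Since $a_{ij}\in\{0,1\}$, the assumption ``$J'_{ij}=0\Rightarrow a_{ij}=0$'' together with its automatic converse gives $a_{ij}=0\Leftrightarrow J'_{ij}=0$. In particular, whenever $J'_{kl}=J'_{ij}$ we must have $a_{kl}=a_{ij}$ (either both pairs are in $E$, or both are not). Contrapositively,
\begin{equation*}
a_{kl}\neq a_{ij}\ \Longrightarrow\ J'_{kl}\neq J'_{ij}.
\end{equation*}

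Next, I would invoke the key consequence of Theorem \ref{i}, namely $q_{ki}q_{lj}=0$ whenever $J'_{kl}\neq J'_{ij}$. Combining this with the implication just established, one obtains
\begin{equation*}
q_{ki}q_{lj}(a_{kl}-a_{ij})=0\qquad\text{for all } i,j,k,l\in V,
\end{equation*}
or equivalently $a_{kl}\,q_{ki}q_{lj}=a_{ij}\,q_{ki}q_{lj}$.

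Summing this identity over $k,l$ and using $\sum_k q_{ki}=1=\sum_l q_{lj}$, which holds because $Q$ is the corepresentation matrix of a coaction on $C(V)$ and is therefore a magic unitary, yields
\begin{equation*}
\sum_{k,l\in V}a_{kl}\,q_{ki}q_{lj}=a_{ij}\Bigl(\sum_{k}q_{ki}\Bigr)\Bigl(\sum_{l}q_{lj}\Bigr)=a_{ij}\,1_S.
\end{equation*}
This is exactly the $A$-analogue of the identity $Q^{\beta\gamma}=J'_{\beta\gamma}1$ that appears in Theorem \ref{f}. Since that theorem (and the arguments in Lemmas \ref{d} and \ref{e}) nowhere uses that the matrix in question comes from a Potts hamiltonian, the same argument applied to $A$ in place of $J'$ gives $QA=AQ$. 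For concreteness, I would spell this out directly: multiply the displayed identity on the left by $q_{mi}$, use the orthogonality relation $q_{mi}q_{ki}=\delta_{mk}q_{mi}$ valid in $S_n^+$ to collapse the sum over $k$, obtain $a_{ij}q_{mi}=q_{mi}\sum_l a_{ml}q_{lj}$, and then sum over $i$ using $\sum_i q_{mi}=1$ to conclude $(QA)_{mj}=\sum_i q_{mi}a_{ij}=\sum_l a_{ml}q_{lj}=(AQ)_{mj}$.

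There is no substantive obstacle once the correct perspective is taken; the only subtlety is noticing that Theorem \ref{i} produces vanishings indexed by the possibly many distinct values of $J'$, which is strictly stronger than what the two-valued matrix $A$ needs, so the implication $a_{kl}\neq a_{ij}\Rightarrow J'_{kl}\neq J'_{ij}$ is precisely what bridges the gap. The extra hypothesis of the lemma is exactly what makes this implication valid.
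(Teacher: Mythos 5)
Your proof is correct and follows essentially the same route as the paper: both reduce the claim to showing $\sum_{k,l\in V}a_{kl}q_{ki}q_{lj}=a_{ij}1$ (Theorem \ref{f} applied to $A$ in place of $J'$) and obtain the needed vanishing of cross terms from Theorem \ref{i} combined with the equivalence $a_{ij}=0\Leftrightarrow J'_{ij}=0$ supplied by the extra hypothesis. Your uniform identity $a_{kl}\,q_{ki}q_{lj}=a_{ij}\,q_{ki}q_{lj}$ merely packages into one line the paper's two separate cases $a_{ij}=1$ and $a_{ij}=0$.
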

\begin{proof}
	We define $R^{ij}=\sum_{k,l\in V}a_{kl}q_{ki}q_{lj}$. From theorem ($\ref{f}$), we know that $Q$ commutes with $A$ iff $R^{ij}=a_{ij}1$ for all $i,j\in V$. To prove the result, it is enough to show
	\begin{equation*}
		R^{ij}=a_{ij}1\quad\text{for all}\quad i,j\in V.
	\end{equation*} From our hypothesis and theorem ($\ref{i}$) it follows that
	\begin{equation}\label{6.1}
		q_{ki}q_{lj}=0 \qquad \text{if $a_{ij}=1$ but  $a_{kl}=0$}
	\end{equation}
	Let $i,j\in V$ be such that $a_{ij}=1$. We observe that, 
	\begin{equation*}
	 1=\sum_{k,l\in V}q_{ki}q_{lj}=\sum_{\substack{k,l\in V\\\text{with} \hspace{1mm} a_{kl}=1}}q_{ki}q_{lj}+\sum_{\substack{k,l\in V\\\text{with}\hspace{1mm} a_{kl}=0}}q_{ki}q_{lj}=R^{ij}+0\quad \text{(from $\eqref{6.1}$)}
	\end{equation*} 
	The other case where $a_{ij}=0$ similarly follows.

\end{proof}

\begin{thm}
 There exists a unique universal object in the category of compact quantum groups co-acting on $(V,E)$ by preserving the hamiltonian of q-state Potts model. 
\end{thm}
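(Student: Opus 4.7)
The plan is to reduce this existence result to the general machinery already developed in Theorem~\ref{uni.obj.complex.matrix}. First, I would observe that combining Lemma~\ref{d} with Theorem~\ref{h} yields a clean characterization: a coaction $\alpha$ of a compact quantum group $(S,\Delta)$ on $C(V)$ preserves the hamiltonian of the q-state Potts model on $(V,E)$ if and only if its corepresentation matrix $Q$ commutes with the weighted matrix $J' = (J'_{ij}) \in M_n(\mathbb{C})$. Consequently, the category of compact quantum groups preserving the Potts hamiltonian coincides, objects and morphisms, with the category whose objects are compact quantum groups coacting on $C(V)$ such that the corepresentation matrix commutes with $J'$ (morphisms are in both cases the intertwining quantum group homomorphisms, so this identification of categories is an isomorphism and not merely an equivalence).

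Second, I would invoke Theorem~\ref{uni.obj.complex.matrix} applied to the specific matrix $A := J' \in M_n(\mathbb{C})$. This immediately produces the universal object, realized explicitly as the quotient $S_n^+/\langle UJ' - J'U\rangle$, where $U$ is the corepresentation matrix of the canonical coaction of $S_n^+$ on $C(V)$. The coproduct descends from that of $S_n^+$, and the induced coaction on $C(V)$ is the canonical $S_n^+$-coaction composed with the quotient map. Uniqueness up to isomorphism is then the standard categorical fact for universal objects.

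The main conceptual content has already been discharged upstream: Theorem~\ref{h}, proved via the judicious choice of two-point test configurations together with positivity and faithfulness of the Haar state on the underlying Hopf $*$-subalgebra, is what collapses the hamiltonian-preservation condition into an algebraic commutation relation with $J'$. Once this reduction is in hand, the present theorem is essentially a corollary of the Banica-type result in Theorem~\ref{uni.obj.complex.matrix}, and there is no genuine obstacle remaining. The only care needed is to verify that the categorical identification above respects the canonical coaction in a way compatible with the universal property, which follows immediately from the universal property of $S_n^+$ itself: any coaction on $C(V)$ with corepresentation matrix satisfying Wang's magic conditions factors through $S_n^+$, and the additional commutation $QJ' = J'Q$ forces it to factor further through $S_n^+/\langle UJ' - J'U\rangle$.
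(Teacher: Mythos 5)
Your proposal is correct and follows essentially the same route as the paper: the paper's proof likewise combines Lemma~\ref{d} and Theorem~\ref{h} to characterize hamiltonian-preserving coactions as exactly those whose corepresentation matrix commutes with $J'$, and then applies Theorem~\ref{uni.obj.complex.matrix} with $A = J'$. Your additional remarks on the categorical identification and the explicit quotient $S_n^+/\langle UJ'-J'U\rangle$ only make explicit what the paper leaves implicit.
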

\begin{proof}
	From lemma (\ref{d}) and theorem (\ref{h}), it follows that a compact quantum group $(S,\Delta)$ co-acts on $(V,E)$ via preserving the hamiltonian iff the co-representation matrix $Q$ commutes with $J'$. Hence, from theorem (\ref{uni.obj.complex.matrix}) our claim follows.
\end{proof}

Let us call this unique universal object  quantum symmetry group of Potts model on $(V,E)$. Furthermore, if the hypothesis in lemma ($\ref{j}$) is satisfied, then this object turns out to be a quantum subgroup of the quantum automorphsim group of the underlying graph $(V,E)$. .
\section{Examples}

\subsection{Example 1} 
Here we look at an example of Potts model where we observe that a slight fluctuation in hamiltonian can destroy the quantum symmetry present in system and turn it into a classical one.  This change in quantum symmetry can indicate towards a kind of phase transition in the system.
\par
 We start with the graph of a cube as shown in figure (\ref{ex1.}). Let the set of vertices be $V=\{1,2,3,4,1',2',3',4'\}$ and the edges $E$ are as shown in the picture.

 Let $\lambda \in \mathbb{C}$. Consider the hamiltonian $H_{\lambda}$ given by,
\begin{equation}
	H_{\lambda}(\omega)=\sum_{i,j\in V}J'_{ij}\delta_{\omega(i),\omega(j)} \quad \forall \omega\in\Omega_P
\end{equation}
where $J'_{ij}$ is given by,
\begin{equation*}
\begin{aligned}
J'_{ij}&=1\quad\text{if}\quad (i,j)\neq (4,4'),(4',4).\\
J'_{44'}&=J'_{4'4}=\lambda\\
\end{aligned}
\end{equation*}
\begin{figure}
	\begin{center}
		\includegraphics[]{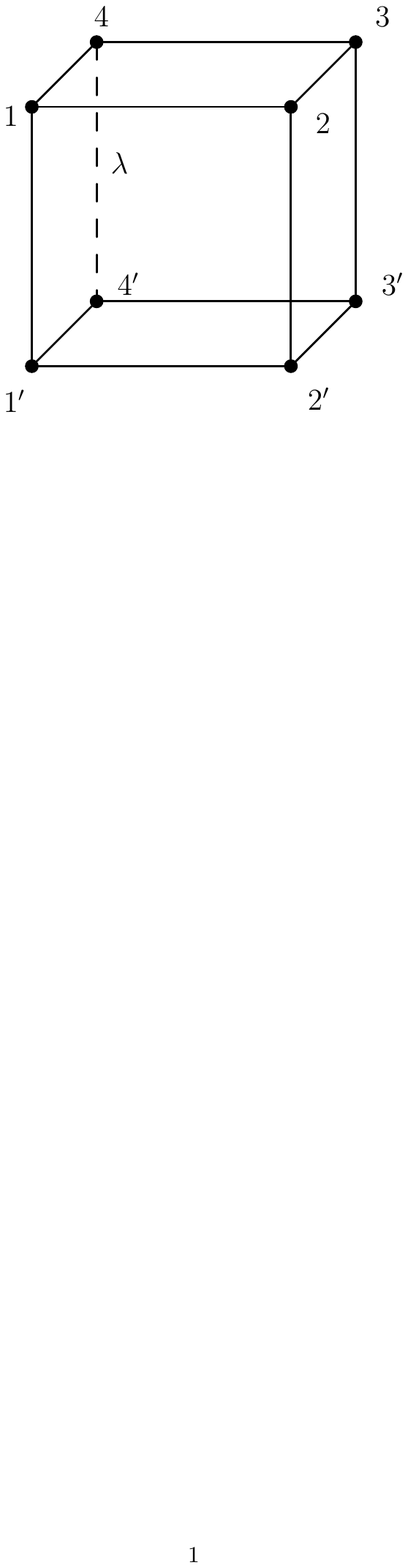}
		\end{center}
	\caption{The graph of a cube with a specified hamiltonian on it. }\label{ex1.}
\end{figure}
\begin{rem}
 When $\lambda=1$, The quantum symmetry group of Potts Model is the quantum automoprhism group for the graph $(V,E)$ which is a non-classical compact quantum group. See for instance (\cite{MR2336835}).
\end{rem}
  
\begin{lem} When $\lambda$ is not $1$, the quantum symmetry group of Potts model on $(V,E)$ becomes commutative.
\end{lem}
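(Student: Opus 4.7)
The plan is to show that the quantum symmetry $C^*$-algebra $\mathcal S$ is generated by four mutually orthogonal projections summing to $1$, and hence is commutative.

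I would start by invoking Lemma~\ref{h.1} with the function $f$. A direct computation gives $f(v)=3$ for $v\in V\setminus\{4,4'\}$ and $f(4)=f(4')=2+\lambda$; since $\lambda\neq 1$ the two level sets are genuinely distinct, so $\alpha$ preserves both the subspace spanned by $\{\chi_4,\chi_{4'}\}$ and its complement. This forces $q_{vw}=0$ whenever exactly one of $v,w$ lies in $\{4,4'\}$, decomposing $Q$ into a $2\oplus 6$ block form; the $2\times 2$ block on $\{4,4'\}$ is automatically commutative with entries $q_{44}=q_{4'4'}=\pi$, $q_{44'}=q_{4'4}=\pi'=1-\pi$. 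I would then exploit Theorem~\ref{f}, i.e.\ $Q^{ij}=J'_{ij}\cdot 1$, off-diagonally: comparing $(QJ')_{v,2}$ with $(J'Q)_{v,2}$ produces a sum of two positive elements summing to zero, which forces $q_{v,2}=q_{v,2'}=0$ for $v\in\{1,3,1',3'\}$ and symmetrically for rows. Hence $\{2,2'\}$ separates off as another invariant block, with entries that turn out to match the $\{4,4'\}$-block: $q_{22}=q_{2'2'}=\pi$, $q_{22'}=q_{2'2}=\pi'$. The remaining boundary relations from $Q^{ij}=J'_{ij}\cdot 1$ force the equalities $q_{11}=q_{33}$, $q_{13}=q_{31}$, $q_{1'1'}=q_{3'3'}$, $q_{1'3}=q_{3'1}$, and their $1\leftrightarrow 1'$, $3\leftrightarrow 3'$ counterparts, so the $4\times 4$ block on $W:=\{1,3,1',3'\}$ depends on at most eight distinct entries, call them
\[A=q_{11},\ B=q_{13},\ C=q_{11'},\ D=q_{13'},\ I=q_{1'1},\ J=q_{1'3},\ K=q_{1'1'},\ L=q_{1'3'},\]
with $A+B=K+L=\pi$ and $C+D=I+J=\pi'$.

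The heart of the argument is to collapse this block further. Within $W$, $J'$ equals $1$ only on the two edges $\{1,1'\}$ and $\{3,3'\}$ and $0$ elsewhere. Applying Theorem~\ref{i} to suitable ``non-edge versus edge'' pairs in $W$ yields the additional zero products $AL=BK=CJ=DI=0$. Now I would multiply the row-$1'$ identity $I+J+K+L=1$ on the left by $A$: column-$1$ orthogonality kills $AI$ and $AJ$, Theorem~\ref{i} kills $AL$, and only $AK$ survives, giving $AK=A$. The dual move, multiplying the row-$1$ identity by $K$, gives $KA=K$; since $A$ and $K$ are self-adjoint, $AK=(KA)^{*}=K$, so $A=K$. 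The same procedure applied to the pairs $(B,L),(C,I),(D,J)$ yields $B=L$, $C=I$, $D=J$. This leaves only four distinct entries $A,B,C,D$ in the $W$-block, and the row sum $A+B+C+D=1$ of magic-unitary projections makes them pairwise orthogonal. They therefore generate a commutative subalgebra isomorphic to $\mathbb C^{4}$, and since every other entry of $Q$ is either $0$ or one of $\pi=A+B$, $\pi'=C+D$, the entire $C^*$-algebra $\mathcal S$ is commutative.

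The main obstacle I anticipate is the combinatorial bookkeeping in the collapse step: one has to verify that the pool of Theorem~\ref{i}-zeros together with magic-unitary orthogonalities really suffices to annihilate every cross term in the row-sum-times-projection computations, so that the four crucial identities $A=K$, $B=L$, $C=I$, $D=J$ emerge cleanly. Once these are in hand, commutativity follows immediately.
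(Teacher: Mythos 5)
Your proposal is correct, and its skeleton is the same as the paper's: use Lemma \ref{h.1} with the two level sets of $f$ (namely $f\equiv 3$ off $\{4,4'\}$ and $f(4)=f(4')=2+\lambda\neq 3$) to split off the block $\{4,4'\}$, then use $QJ'=J'Q$ to annihilate the entries joining $\{2,2'\}$ to $\{1,3,1',3'\}$ and to identify the $\{2,2'\}$- and $\{4,4'\}$-blocks with $\pi=q_{11}+q_{13}$, $\pi'=1-\pi$, and finally collapse the remaining $4\times 4$ block. Where you genuinely diverge is the collapse: the paper reads the identifications $q_{1'1'}=q_{11}$, $q_{1'1}=q_{11'}$, $q_{1'3}=q_{13'}$, $q_{1'3'}=q_{13}$ directly off single entries of $QJ'=J'Q$ (e.g.\ $(J'Q)_{11'}=q_{1'1'}$ against $(QJ')_{11'}=q_{11}$), whereas you rederive them as $A=K$, $B=L$, $C=I$, $D=J$ by combining the vanishing products from Theorem \ref{i} with the magic-unitary row/column sums and the standard ``$pq=p$ and $qp=q$ imply $p=q$'' trick for projections; this is longer but sound (I checked that each product you need to kill is indeed killed either by a shared row/column or by Theorem \ref{i}). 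Your version has one real advantage: you explicitly record $q_{11}=q_{33}$, $q_{13}=q_{31}$ and their primed counterparts, obtained by comparing $(J'Q)_{2j}$ with $(QJ')_{2j}$. These are genuinely needed and the paper leaves them implicit --- the relations it displays (row-$1$ comparisons plus $q_{i'j}=q_{ij'}$, $q_{i'j'}=q_{ij}$) do not by themselves force commutativity, as one sees by taking $q_{13}=q_{31}=q_{13'}=q_{31'}=0$, $q_{11'}=1-q_{11}$, $q_{33'}=1-q_{33}$ with $q_{11},q_{33}$ non-commuting projections, which satisfies all the displayed relations. One small slip in your write-up: the vanishing of $q_{v2}$ and $q_{v2'}$ for $v\in\{1,3,1',3'\}$ comes from comparing $(J'Q)$ and $(QJ')$ at the entries $(4,2)$ and $(4',2)$ (or $(2,v)$ and $(2',v)$), not at $(v,2)$ as written; at $(v,2)$ with $v\in\{1,3,1',3'\}$ both sides are sums of three nonzero terms and no positivity argument applies. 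The correct comparisons give exactly what you claim, so this does not affect the proof.
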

\begin{proof}Let $\lambda$ be a complex number which is not $1$. Let $\alpha$ be a coaction of a compact quantum group $(S,\Delta)$ on $C(V)$ preserving the hamiltonian $H_{\lambda}$. From theorem($\ref{h}$), the co-representation matrix $Q$ of $\alpha$ commutes with $J'$. 
\par
 From lemma ($\ref{h.1}$) it follows that $q_{i4}=q_{4j}=q_{i4'}=q_{4'j}=0$ when $i\neq 4,4'$ and $j\neq 4,4'$. Hence it follows that $$q_{44'}=q_{4'4} \quad\text{and}\quad q_{44}=q_{4'4'}$$
 \par
Using $QJ'=J'Q$, we get the following commutation relations:
\begin{equation}\label{y}
\begin{aligned}
q_{44}=&(J'Q)_{14}=(QJ')_{14}=q_{11}+q_{13}\\
0=&(J'Q)_{24}=(QJ')_{24}=q_{21}+q_{23}\\
q_{4'4}=&(J'Q)_{1'4}=(QJ')_{1'4}=q_{1'1}+q_{1'3}\\
0=&(J'Q)_{2'4}=(QJ')_{2'4}=q_{2'1}+q_{2'3}\\
q_{4'4'}=&(J'Q)_{1'4'}=(QJ')_{1'4'}=q_{1'1'}+q_{1'3'}\\
0=&(J'Q)_{2'4'}=(QJ')_{2'4'}=q_{2'1'}+q_{2'3'}\\
\end{aligned}
\end{equation}
From equations (\ref{y}) We note that $q_{21}+q_{23}=0$, which implies $q_{21}=0$ and $q_{23}=0$. Similarly $q_{2'1}=q_{2'3}=q_{21'}=q_{23'}=q_{2'1'}=q_{2'3'}=0$. The co-representation matrix $Q$ becomes,
$$
\begin{bmatrix}
q_{11}&0&q_{13}&0&q_{11'}&0&q_{13'}&0\\
0&q_{22}&0&0&0&q_{22'}&0&0\\
q_{31}&0&q_{33}&0&q_{31'}&0&q_{33'}&0\\
0&0&0&q_{44}&0&0&0&q_{44'}\\
q_{1'1}&0&q_{1'3}&0&q_{1'1'}&0&q_{1'3'}&0\\
0&q_{2'2}&0&0&0&q_{2'2'}&0&0\\
q_{3'1}&0&q_{3'3}&0&q_{3'1'}&0&q_{3'3'}&0\\
0&0&0&q_{4'4}&0&0&0&q_{4'4'}\\
\end{bmatrix}
$$
By equating 1st row of $(J'Q)$ and $(QJ')$ we get,
\begin{equation}\label{z}
	\begin{aligned}
	q_{1'1}=(J'Q)_{11}&=(QJ')_{11}=q_{11'}\\
	q_{22}=(J'Q)_{12}&=(QJ')_{12}=q_{11}+q_{13}=q_{44}\\
	q_{1'3}=(J'Q)_{13}&=(QJ')_{13}=q_{13'}\\
	q_{1'1'}=(J'Q)_{11'}&=(QJ')_{11'}=q_{11}\\
	q_{22'}=(J'Q)_{12'}&=(QJ')_{12'}=q_{11'}+q_{13'}=q_{44'}\\
    q_{1'3'}=(J'Q)_{13'}&=(QJ')_{13'}=q_{13}\\
   \end{aligned}
\end{equation}
Finally, from (\ref{z}) and (\ref{y}) we observe that,
\begin{equation}
	q_{i'j}=q_{ij'}\quad \text{and}\quad q_{i'j'}=q_{ij}\quad \text{for all}\quad i,j\in \{1,2,3,4\}.
\end{equation}
These are enough relations to conclude that the entries of $Q$ commute with each other. Hence the quantum symmetry group of Potts model on $(V,E)$ is commutative.
\end{proof}
\subsection{Example 2}
We look at an example of Potts model where slight fluctuation of hamiltonian changes the quantum symmetry of the system but does not affect its classical symmetry. 
\par 
We consider the graph $(V,E)$ shown in figure (\ref{ex2.}). Let $\lambda_1, \lambda_2\in \mathbb{C}$ be such that $|\lambda_1|\leq 1$ and $|\lambda_2|\leq 1$. Consider the following hamiltonian $H_{\lambda_1,\lambda_2}$ on $(V,E)$ given by
\begin{equation*}
H_{\lambda_1,\lambda_2}(\omega)=\sum_{i,j\in V}J'_{ij}\delta_{\omega(i),\omega(j)} \quad \forall\quad \omega\in\Omega_P
\end{equation*}

	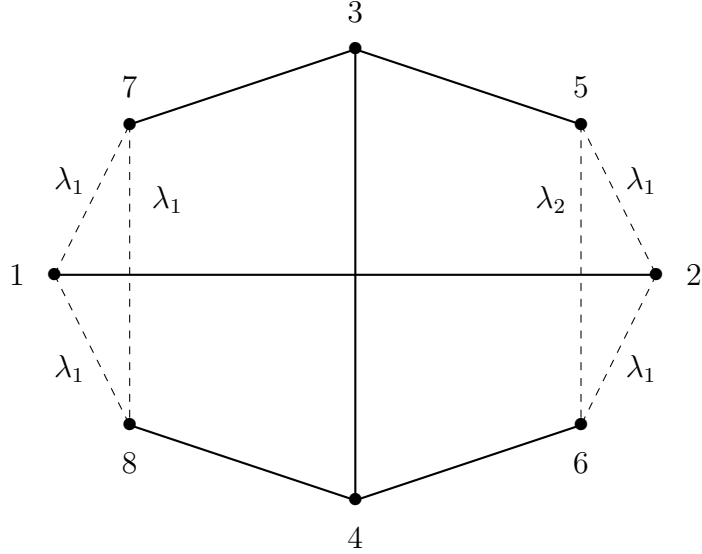
\begin{figure}
	\begin{center}
		\begin{tikzpicture}
		\node at (-4,0)	{$\bullet$};
		\node at (-4.5,0) {$1$};
		\node at (4,0) {$\bullet$};
		\node at (4.5,0) {$2$};
		\node at (0,3) {$\bullet$};
		\node at (0,3.5) {$3$};
		\node at (0,-3) {$\bullet$};
		\node at (0,-3.5) {$4$};
		\node at (-3,2) {$\bullet$};
		\node at (-3,2.5) {$7$};
		\node at (3,2) {$\bullet$};
		\node at (3,2.5) {$5$};
		\node at (-3,-2) {$\bullet$};
		\node at (-3,-2.5) {$8$};
		\node at (3,-2) {$\bullet$};
		\node at (3,-2.5) {$6$};
		\node at (-3.8,1.25) {$\lambda_1$};
		\node at (-3.8,-1.25) {$\lambda_1$};
		\node at (3.8,1.25) {$\lambda_1$};
		\node at (3.8, -1.25) {$\lambda_1$};
		\node at (-2.5,1) {$\lambda_1$};
		\node at (2.6,1) {$\lambda_2$};
		\draw[dashed] (-4,0) -- (-3,2);
		\draw[dashed] (-4,0) -- (-3,-2);
		\draw[thick] (-4,0) -- (4,0);
		\draw[dashed] (-3,2) -- (-3,-2);
		\draw[thick] (-3,2) -- (0,3);
		\draw[thick] (0,3) -- (3,2);
		\draw[thick] (0,3) -- (0,-3);
		\draw[dashed] (3,2) -- (3,-2);
		\draw[dashed] (3,2) -- (4,0);
		\draw[dashed] (4,0) -- (3,-2);
		\draw[thick] (3,-2) -- (0,-3);
		\draw[thick] (0,-3) -- (-3,-2);
		\end{tikzpicture}
\end{center}
\caption{The graph in example 2 with the corresponding hamiltonian.}\label{ex2.}
\end{figure}
where $(J'_{ij})_{i,j\in V}$ is a symmetric matrix and $J'_{78}=J_{17}=J'_{18}=J'_{25}=J'_{26}=\lambda_1, J'_{56}=\lambda_2$ and $J'_{ij}=1$ otherwise. The underlying graph $(V,E)$ does not have any quantum symmetry (see section (4.4) of \cite{schmidt2020thesis}). In light of theorem (\ref{i}), we observe that,
\begin{rem}
	When $\lambda_1\neq 0$ and $\lambda_1=\lambda_2$,  the quantum symmetry group of Potts model is $\mathbb{C}(\mathbb{Z}_2)\otimes\mathbb{C}(\mathbb{Z}_2)$.
\end{rem} 

\begin{lem}
	When $\lambda_1=0$ and $\lambda_1\neq\lambda_2$, the quantum symmetry group of Potts model on $(V,E)$ becomes non-classical.
\end{lem}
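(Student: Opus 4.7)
The plan is to determine the precise shape of the co-representation matrix $Q$ of an arbitrary hamiltonian-preserving coaction and then to identify the resulting universal algebra as a noncommutative $C^*$-algebra, showing that it is generated by two independent self-adjoint projections.

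First I would compute $f(v)=\sum_{j}J'_{vj}$ from Notation (\ref{b}); under the hypothesis $\lambda_1=0$ the values come out to be $f(1)=f(2)=f(7)=f(8)=1$, $f(3)=f(4)=3$, and $f(5)=f(6)=1+\lambda_2$. By lemma (\ref{h.1}) this forces $q_{ij}=0$ whenever $f(i)\neq f(j)$. For $\lambda_2\notin\{0,2\}$ this already isolates the three invariant blocks $\{1,2,7,8\}$, $\{3,4\}$, $\{5,6\}$. In the borderline case $\lambda_2=2$ (where $f(5)=f(3)$), the separation of $\{3,4\}$ from $\{5,6\}$ will instead come from Theorem (\ref{i}) applied to $(i,j)=(5,6)$: since the value $\lambda_2=2$ is attained by $J'$ only at the positions $(5,6),(6,5)$, one obtains $q_{k5}q_{l6}=0$ outside those positions, and summing against $\sum_l q_{l6}=1$ gives $q_{k5}=0$ for $k\in\{3,4\}$.

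Next I would refine the block $\{1,2,7,8\}$. The $(1,7)$-entry of $QJ'=J'Q$ reduces to $q_{27}=q_{13}$, and the right-hand side is already zero from the previous step; running the same computation on the $(1,8),(2,7),(2,8)$-entries and their transposes forces $q_{i7}=q_{i8}=q_{7i}=q_{8i}=0$ for $i\in\{1,2\}$. Thus $Q$ decomposes as a direct sum of four $2\times 2$ magic unitaries indexed by $\{1,2\},\{3,4\},\{5,6\},\{7,8\}$, each of which automatically has the form $\bigl(\begin{smallmatrix}x&1-x\\1-x&x\end{smallmatrix}\bigr)$ for a single self-adjoint projection; I call these four projections $a,p_1,p_2,b$. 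The $(3,5)$-entry of $QJ'=J'Q$ then collapses to $p_2=p_1$ and the $(3,7)$-entry collapses to $b=p_1$, so the three blocks on $\{3,\dots,8\}$ are governed by a common projection $p:=p_1=p_2=b$, while $a$ remains unconstrained because $J'$ has no nonzero entry between $\{1,2\}$ and $\{3,\dots,8\}$.

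The universal algebra of the quantum symmetry group is therefore the universal unital $C^*$-algebra on two self-adjoint projections $a,p$, namely the free product $C(\mathbb{Z}_2)*C(\mathbb{Z}_2)$, endowed with the coproduct $\Delta(a)=a\otimes a+(1-a)\otimes(1-a)$ and $\Delta(p)=p\otimes p+(1-p)\otimes(1-p)$. This algebra is well known to be noncommutative (for instance, $a$ and $p$ fail to commute in its standard realization as matrix-valued functions on $[0,\pi/2]$), so the quantum symmetry group of the model is non-classical. I expect the only genuine obstacle to lie in the first step at the borderline value $\lambda_2=2$, where the linear invariant $f$ alone does not separate $\{3,4\}$ from $\{5,6\}$ and one must fall back on Theorem (\ref{i}) together with the fact that $\lambda_2$ is a value attained by $J'$ only on the edge $(5,6)$.
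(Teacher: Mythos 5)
Your proof is correct and follows essentially the same route as the paper: Lemma (\ref{h.1}) applied to the level sets of $f$ gives the coarse block structure, selected entries of $QJ'=J'Q$ then split $\{1,2,7,8\}$ into $\{1,2\}$ and $\{7,8\}$ and identify the projections governing the blocks $\{3,4\},\{5,6\},\{7,8\}$, and the universal object is recognised as the noncommutative algebra $C(\mathbb{Z}_2)*C(\mathbb{Z}_2)$. (Your concern about the borderline value $\lambda_2=2$ is moot since the standing assumption $|\lambda_2|\le 1$ already excludes it, though the fix you propose via Theorem (\ref{i}) is valid.)
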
 
\begin{proof}
	let $(S,\Delta)$ be the quantum symmetry group for Potts model on $(V,E)$ co-acting on $C(V)$ via $\alpha$. As before, from theorem ($\ref{h}$) it follows that the co-representation matrix $Q$ commutes with $J'$.
	\par
Note that from lemma (\ref{h.1}) it follows that 
\begin{equation}\label{e.2.1}
	\begin{aligned}
	q_{3j}=q_{4j}&=0\quad \text{for}\quad j\neq 3,4.\\
	q_{1j}=q_{2j}&=0\quad \text{for}\quad j\in\{3,4,5,6\}.\\
	q_{7j}=q_{8j}&=0 \quad \text{for} \quad j\in\{3,4,5,6\}.
\end{aligned}
\end{equation}
We observe that,
\begin{equation}\label{e.2.2}
\begin{aligned}
	q_{34}=(J'Q)_{74}&=(QJ')_{74}=q_{78}\\
	q_{34}=(J'Q)_{54}&=(QJ')_{54}=q_{56}\\
	0=(J'Q)_{13}&=(QJ')_{13}=q_{17}\\
	0=(J'Q)_{14}&=(QJ')_{14}=q_{18}\\
	0=(J'Q)_{23}&=(QJ')_{23}=q_{27}\\
	0=(J'Q)_{24}&=(QJ')_{24}=q_{28}\\
\end{aligned}
\end{equation}
In light of equations (\ref{e.2.1}) and (\ref{e.2.2}), the co-representation matrix $Q$ becomes,
$$
\begin{bmatrix}
	1-p&p&0&0&0&0&0&0\\
	p&1-p&0&0&0&0&0&0\\
	0&0&1-q&q&0&0&0&0\\
	0&0&q&1-q&0&0&0&0\\
	0&0&0&0&1-q&q&0&0\\
	0&0&0&0&q&1-q&0&0\\
	0&0&0&0&0&0&1-q&q\\
	0&0&0&0&0&0&q&1-q
\end{bmatrix}
$$
where two projections $p$ and $q$ do not need to commute. Hence we conclude that the quantum symmetry group of Potts model on $(V,E)$ is $\mathbb{C}(\mathbb{Z}_2)*\mathbb{C}(\mathbb{Z}_2)$.
\end{proof}  
\begin{rem}
	For $\lambda_1=0$ and $\lambda_1\neq\lambda_2$, the quantum symmetry group for Potts model is $\mathbb{C}(\mathbb{Z}_2)*\mathbb{C}(\mathbb{Z}_2)$ which is indeed a non-classical comapct quantum group. On the other hand, the classical symmetry group for Potts model is $\mathbb{C}(\mathbb{Z}_2)\otimes\mathbb{C}(\mathbb{Z}_2)$, which is same as the case when $\lambda_1\neq 0$ and $\lambda_1=\lambda_2$. Hence we observe that, slight changes in the parameters $\lambda_1$ and $\lambda_2$ keep the classical symmetry same but affect quantum symmetry in the system drastically. 
\end{rem}

\textbf{Acknowledgement}:
\begin{itemize}
\item We thank the referee for his/her insightful comments and poinitng out some more references related to our work.
\item It is also to be noted that Debashish Goswami is partially supported by J.C. Bose national fellowship awarded by D.S.T., Government of India.
\end{itemize}

\par
\begin{bibdiv}
	\begin{biblist}
		\bib{MR1654119}{article}{
			author={Banica, Teodor},
			title={Hopf algebras and subfactors associated to vertex models},
			journal={J. Funct. Anal.},
			volume={159},
			date={1998},
			number={1},
			pages={243--266},
			issn={0022-1236},
			review={\MR{1654119}},
			doi={10.1006/jfan.1998.3307},
		}
		\bib{MR2146039}{article}{
			author={Banica, Teodor},
			title={Quantum automorphism groups of homogeneous graphs},
			journal={J. Funct. Anal.},
			volume={224},
			date={2005},
			number={2},
			pages={243--280},
			issn={0022-1236},
			review={\MR{2146039}},
			doi={10.1016/j.jfa.2004.11.002},
		}
	\bib{MR2174219}{article}{
		author={Banica, Teodor},
		title={Quantum automorphism groups of small metric spaces},
		journal={Pacific J. Math.},
		volume={219},
		date={2005},
		number={1},
		pages={27--51},
		issn={0030-8730},
		review={\MR{2174219}},
		doi={10.2140/pjm.2005.219.27},
	}
\bib{MR2336835}{article}{
	author={Banica, Teodor},
	author={Bichon, Julien},
	author={Chenevier, Ga\"{e}tan},
	title={Graphs having no quantum symmetry},
	language={English, with English and French summaries},
	journal={Ann. Inst. Fourier (Grenoble)},
	volume={57},
	date={2007},
	number={3},
	pages={955--971},
	issn={0373-0956},
	review={\MR{2336835}},
}
	
\bib{MR1358358}{book}{
	author={Chari, Vyjayanthi},
	author={Pressley, Andrew},
	title={A guide to quantum groups},
	note={Corrected reprint of the 1994 original},
	publisher={Cambridge University Press, Cambridge},
	date={1995},
	pages={xvi+651},
	isbn={0-521-55884-0},
	review={\MR{1358358}},
}
\bib{drinfeld1986icm}{inproceedings}{
	title={Quantum groups},
	author={Drinfeld, V.G.},
	booktitle={Proceedings in International Congress of
		Mathematicians}
	pages={798-820},
	year={1986},
	
}
\bib{MR3559897}{book}{
	author={Goswami, Debashish},
	author={Bhowmick, Jyotishman},
	title={Quantum isometry groups},
	series={Infosys Science Foundation Series},
	note={Infosys Science Foundation Series in Mathematical Sciences},
	publisher={Springer, New Delhi},
	date={2016},
	pages={xxviii+235},
	isbn={978-81-322-3665-8},
	isbn={978-81-322-3667-2},
	review={\MR{3559897}},
	doi={10.1007/978-81-322-3667-2},
}
\bib{MR990215}{article}{
	author={Jones, V. F. R.},
	title={On knot invariants related to some statistical mechanical models},
	journal={Pacific J. Math.},
	volume={137},
	date={1989},
	number={2},
	pages={311--334},
	issn={0030-8730},
	review={\MR{990215}},
}
\bib{MR1317365}{article}{
	author={Jones, V. F. R.},
	title={The Potts model and the symmetric group},
	conference={
		title={Subfactors},
		address={Kyuzeso},
		date={1993},
	},
	book={
		publisher={World Sci. Publ., River Edge, NJ},
	},
	date={1994},
	pages={259--267},
	review={\MR{1317365}},
}
\bib{MR1473221}{book}{
	author={Jones, V.},
	author={Sunder, V. S.},
	title={Introduction to subfactors},
	series={London Mathematical Society Lecture Note Series},
	volume={234},
	publisher={Cambridge University Press, Cambridge},
	date={1997},
	pages={xii+162},
	isbn={0-521-58420-5},
	review={\MR{1473221}},
	doi={10.1017/CBO9780511566219},
}

\bib{landau1969statistical}{book}{
	author={Landau, LD},
	author={Lifshitz, EM},
	title={Statistical Physics},
	publisher={Oxford: Pergamon Press},
	date={1969},
	pages={xxviii+235},
	
}
\bib{martin1991}{book}{
	doi = {10.1142/0983},
	url = {https://doi.org/10.1142/0983},
	year = {1991},
	publisher = {{WORLD} {SCIENTIFIC}},
	author = {Martin, Paul},
	title = {Potts Models and Related Problems in Statistical Mechanics}
}
\bib{MR1645264}{article}{
	author={Maes, Ann},
	author={Van Daele, Alfons},
	title={Notes on compact quantum groups},
	journal={Nieuw Arch. Wisk. (4)},
	volume={16},
	date={1998},
	number={1-2},
	pages={73--112},
	issn={0028-9825},
	review={\MR{1645264}},
}
\bib{perez2012symmetry}{inproceedings}{
	title={Symmetry considerations in structural phase transitions},
	author={Perez-Mato, J Manuel and Aroyo, MI and Orobengoa, D},
	booktitle={EPJ Web of Conferences},
	volume={22},
	pages={00008},
	year={2012},
	organization={EDP Sciences}
	
}
	\bib{schmidt2020thesis}{thesis}{
	author={Schmidt, Simon},
	title={Quantum automorphsim groups of finite Graphs},
	note={Ph.D Thesis}
	date={2020},
	pages={98-100},
}
\bib{MR1637425}{article}{
	author={Wang, Shuzhou},
	title={Quantum symmetry groups of finite spaces},
	journal={Comm. Math. Phys.},
	volume={195},
	date={1998},
	number={1},
	pages={195--211},
	issn={0010-3616},
	review={\MR{1637425}},
	doi={10.1007/s002200050385},
}
\bib{MR1028113}{article}{
	author={Woronowicz, S. L.},
	title={Group structure on noncommutative spaces},
	conference={
		title={Fields and geometry 1986},
		address={Karpacz},
		date={1986},
	},
	book={
		publisher={World Sci. Publ., Teaneck, NJ},
	},
	date={1986},
	pages={478--496},
	review={\MR{1028113}},
}

\bib{MR901157}{article}{
	author={Woronowicz, S. L.},
	title={Compact matrix pseudogroups},
	journal={Comm. Math. Phys.},
	volume={111},
	date={1987},
	number={4},
	pages={613--665},
	issn={0010-3616},
	review={\MR{901157}},
}

\end{biblist}
\end{bibdiv}

                                                                                                                                     \end{document}